\newtheorem{theorem}{Theorem}[section]
\newtheorem{defi}{Definition}[section]
\newtheorem{corr}{Corollary}[section]
\newcommand{\ap}{\alpha}
\newcommand{\gm}{\gamma}
\newcommand{\dt}{\delta}
\newcommand{\Z}{\mathbb{Z}}
\newcommand{\Q}{\mathbb{Q}}
\newcommand{\F}{\mathbb{F}}
\title{Average Results on the Order of $a$ modulo $p$}
\author{Kim, Sungjin}
\begin{document}
    \maketitle

    \begin{abstract}
    Let $a>1$ be an integer. Denote by $l_a(p)$  the multiplicative order of $a$ modulo primes $p$. We prove that
    if $\frac{x}{\log x\log\log x}=o(y)$, then
    $$
    \frac 1 y \sum_{a\leq y}\sum_{p\leq x}\frac{1}{l_a(p)}=\log x + C\log\log x+O\left(\frac x {y \log\log x}\right) $$
    which is an improvement over a theorem by Felix ~\cite{Fe}.

    Additionally, we also prove two other average results

    If $\log^2 x = o(\psi(x))$ and $x^{1-\dt}\log^3 x = o(y)$, then
    $$
    \frac1y\sum_{a<y}\sum_{\substack{{p<x}\\{l_a(p)>\frac{x}{\psi(x)}}}} 1 =  \pi(x)  + O\left(\frac{x\log x}{\psi(x)}\right)+O\left(\frac{x^{2-\dt}\log^2 x}y\right).$$
    Furthermore, if $x^{1-\dt}\log^3 x = o(y)$, then
    $$
    \frac1y\sum_{a<y}\sum_{\substack{{p<x}\\{p\nmid a}}}l_a(p) =  c\textrm{Li}(x^2) +O\left(\frac{x^2}{\log^A x}\right) + O\left(\frac{x^{3-\dt}\log^2 x}y\right)$$
    where
    $$
    c=\prod_p \left(1-\frac p{p^3-1}\right).$$
        \end{abstract}

\section{Introduction}
    Let $a>1$ be an integer. If $p$ be a prime not dividing $a$, we write  $d=l_a(p)$ if $d$ is the multiplicative order of $a$ modulo $p$. Then $d$ is the smallest positive integer in the congruence $a^d\equiv 1$ (mod $p$). Artin's Conjecture on Primitive Roots (AC) states that $l_a(p)=p-1$ for infinitely many primes $p$. Assuming the Generalized Riemann Hypothesis (GRH), Hooley ~\cite{Ho} proved that $l_a(p)=p-1$ for positive proportion of primes $p\leq x$. It is expected that $l_a(p)$ is large for majority of primes $p\leq x$. In ~\cite{EM}, Erdos and   Murty showed that $l_a(p) \geq p^{1/2+\epsilon(p)}$ for all but $o(\pi(x))$ primes $p\leq x$ where $\epsilon(p)\rightarrow 0$. With much simpler method, they showed  a weaker result $l_a(p)>\frac{\sqrt{p}}{\log p}$ for all but $O(x/\log^3 x)$ primes $p\leq x$. F. Pappalardi ~\cite{P} showed that there exist $\alpha, \delta >0$ such that $l_a(p)\geq p^{1/2}\exp{(\log^{\delta} p)}$ for all but $O(x/\log^{1+\ap} x)$. Kurlberg and Pomerance ~\cite{KP2} applied   Fouvry ~\cite{Fo} to show that there is $\gm>0$ such that $l_a(p)>p^{1/2+\gm}$ for positive proportion of primes $p\leq x$.

    Therefore, it is natural to expect that the average reciprocal of $l_a(p)$ is quite small. Murty and   Srinivasan ~\cite{MS} showed that $\sum_{p<x} \frac{1}{l_a(p)} =O(\sqrt x)$  and that $\sum_{p<x}\frac{1}{l_a(p)} = O(x^{1/4})$ implies AC for $a$.  F. Pappalardi ~\cite{P} proved that for some positive constant $\gamma$, $$\sum_{p<x} \frac{1}{l_a(p)} =O\left(\frac{\sqrt x}{\log^{1+\gamma}x}\right).$$
    For fixed $a$, it seems that it is very difficult to reduce $\sqrt x$ with current knowledge. However, we expect that averaging over $a$ would give some information. So, we take average over $a<y$, but we do not want to have too large $y$ such as $y>x$. For all the average result in this paper, we assume that $y<x$, and try to obtain $y$ as small as possible. The following result by Felix ~\cite{Fe} supports that $l_a(p)$ is mostly large:

    If $\frac{x}{\log x}=o(y)$, then
    $$
    \frac 1 y \sum_{a\leq y}\sum_{p\leq x}\frac{1}{l_a(p)}=\log x + O(\log\log x)+O\left(\frac x y \right).$$
    Felix remarked that the first error term $O(\log\log x)$ can be $C\log\log x + O(1)$ by applying Fiorilli's method ~\cite{Fi}, but did not explicitly find $C$. We find the $C$ in Theorem 1.1. This detailed estimate takes effect when $\frac x{(\log\log x)^2} = o(y)$. We apply a deep result on exponential sums by Bourgain ~\cite{B} to obtain Corollary 2.2 which will be the key for all average results in this paper.
    \begin{theorem}
    If $\frac{x}{\log x\log\log x}=o(y)$, then
    $$
    \frac 1 y \sum_{a\leq y}\sum_{p\leq x}\frac{1}{l_a(p)}=\log x + C\log\log x+O(1)+O\left(\frac x {y \log\log x}\right)  $$
    where
    \begin{align*}
    C&=  2\gamma-2\sum_{p}\frac{\log p}{p^2-p+1}  \\
    & \ \ +  \frac{\zeta(2)\zeta(3)}{\zeta(6)}\sum_{k=1}^{\infty} \frac{\mu(k)}{k^2} \left(-2\sum_{p|k}\frac{(p-1)p\log p}{p^2-p+1}+\log k\right)\prod\limits_{p|k} \left(1+\frac{p-1}{p^2-p+1}\right).
    \end{align*}
    \end{theorem}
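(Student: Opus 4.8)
The plan is to begin from an exact identity for $1/l_a(p)$ in terms of the index $i_a(p)=(p-1)/l_a(p)$. Since $i_a(p)=\sum_{e\mid i_a(p)}\phi(e)$, and since $e\mid i_a(p)$ holds exactly when $a$ is an $e$-th power residue modulo $p$, detecting the latter by multiplicative characters of order dividing $e$ yields, for every $a$ coprime to $p$,
$$
\frac{1}{l_a(p)}=\frac{1}{p-1}\sum_{d\mid p-1}\frac{\phi(d)}{d}\sum_{\mathrm{ord}(\chi)\mid d}\chi(a).
$$
Summing over $a\le y$ and $p\le x$ splits the double sum into a principal-character part, from which the main term will be extracted, and a nonprincipal part, which must be shown to be a small error. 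The entire argument is organized around these two pieces.

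First I would isolate the principal character $\chi_0$. Here $\sum_{a\le y}\chi_0(a)=y(1-1/p)+O(1)$, so the $\chi_0$-contribution to $\sum_{a\le y}1/l_a(p)$ is $\frac{y}{p}\,g(p-1)+O(g(p-1)/p)$, where $g(n):=\sum_{d\mid n}\phi(d)/d=(1*(\phi/\mathrm{id}))(n)$ has Dirichlet series $\zeta(s)^2/\zeta(s+1)$. After dividing by $y$ and summing over $p\le x$, the $O$-term is $O(\log x/y)$, which is $o(x/(y\log\log x))$ in the stated range and hence negligible, so the quantity left to evaluate precisely is $\sum_{p\le x}g(p-1)/p$.

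Next I would evaluate $\sum_{p\le x}g(p-1)/p$ to precision $\log x+C\log\log x+O(1)$. Writing it as $\sum_{d<x}\frac{\phi(d)}{d}\sum_{p\le x,\,p\equiv1\,(d)}\frac1p$ and inserting Mertens' theorem in arithmetic progressions is the natural route; the essential feature is that the inner sum runs only over primes exceeding $d$, so it behaves like $\frac{1}{\phi(d)}(\log\log x-\log\log d)+\cdots$ rather than $\frac{1}{\phi(d)}\log\log x$. Summed against $\frac{\phi(d)}{d}$, the two pieces combine as $\log\log x\sum_{d<x}\frac1d-\sum_{d<x}\frac{\log\log d}{d}$, in which the would-be $(\log x)(\log\log x)$ terms cancel and leave the genuine main term $\log x$ together with a secondary term of size $\log\log x$. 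Carrying this out with the precise secondary Mertens constants in arithmetic progressions (which contribute the factors $\sum_{p\mid d}\frac{\log p}{p-1}$) and controlling the uniformity in $d$ by the Bombieri--Vinogradov theorem for $d\le\sqrt x/\log^A x$, I would then resum over $d$ by multiplicativity. This is where the Titchmarsh constant $\frac{\zeta(2)\zeta(3)}{\zeta(6)}=\prod_p\frac{p^2-p+1}{p(p-1)}$ and the prime sum $\sum_p\frac{\log p}{p^2-p+1}$ emerge, and a M\"obius expansion produces the series over $k$ in the stated value of $C$, leaving the $O(1)$ in the theorem.

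The main obstacle is the nonprincipal part, and this is exactly where Corollary 2.2, and hence Bourgain's exponential-sum bound, is indispensable. Grouping the characters by their exact order $e>1$, this part equals
$$
\frac1{y}\sum_{p\le x}\frac{1}{p-1}\sum_{\substack{e\mid p-1\\ e>1}}\Bigl(\sum_{\substack{d\mid p-1\\ e\mid d}}\frac{\phi(d)}{d}\Bigr)\sum_{\mathrm{ord}(\chi)=e}\;\sum_{a\le y}\chi(a),
$$
and the classical P\'olya--Vinogradov bound $O(\sqrt p\log p)$ for the inner incomplete character sums is far too weak: summed against the weights and over $p\le x$ it produces a quantity of size roughly $x^{3/2}\log x/y$, which is not even $o(1)$ when $y$ is as small as $x/(\log x\log\log x)$. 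I would instead insert the bounds for short character sums of prescribed order supplied by Corollary 2.2, which save a power over the trivial estimate uniformly in this regime; summing the resulting bound against the weights $\phi(d)/d$ and over $p\le x$ should yield exactly the error $O\bigl(x/(y\log\log x)\bigr)$ and simultaneously fix the admissible range $x/(\log x\log\log x)=o(y)$. Assembling the principal main term, the $O(1)$ from the Titchmarsh-type evaluation, and this error bound then gives the theorem.
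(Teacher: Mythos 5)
Your opening identity and the extraction of the principal-character term are fine, and they land you on exactly the paper's main term: $\sum_{p\le x}g(p-1)/p=\sum_{d<x}\frac{\phi(d)}{d}\sum_{p\le x,\,p\equiv1(d)}\frac1p$, which the paper evaluates to $\log x+C\log\log x+O(1)$ via the two-term Titchmarsh divisor asymptotic of Fiorilli and Felix (your alternative via two-term Mertens in progressions plus Bombieri--Vinogradov would need the secondary constants uniformly in $d$ and is left much sketchier than that step deserves, but the object being evaluated is the same). The genuine gap is in the nonprincipal part. Corollary 2.2 does not supply ``bounds for short character sums of prescribed order'': it estimates the counting function $\sum_{a<y,\ a^{d}\equiv1\,(p)}1$, which is the \emph{aggregate} $\frac{d}{p-1}\sum_{\chi^{(p-1)/d}=\chi_0}\sum_{a\le y}\chi(a)$, not any individual sum $\sum_{a\le y}\chi(a)$. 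Moreover, no individual bound, however strong, can close your estimate once you have taken absolute values character by character: by the $L^2$ mean value theorem $\sum_{\chi\bmod p}\bigl|\sum_{a\le y}\chi(a)\bigr|^2\gg py$, so these sums are typically of size $\sqrt y$, your weights $\sum_{d:\,e\mid d\mid p-1}\phi(d)/d$ are $\gg1/\log\log p$ even for $e=p-1$, and summing over the $p-1$ characters and over $p\le x$ leaves a quantity of order at least $x\sqrt y/\log^{O(1)}x$ before dividing by $y$ --- a positive power of $x$ beyond the required $O(x/\log\log x)$. The saving must come from cancellation \emph{within} the character aggregate, which forces you to regroup the characters into power-residue indicators before invoking Bourgain; that regrouping is precisely the M\"obius inversion $\sum_{a\le y,\ l_a(p)=d}1=\sum_{d'\mid d}\mu(d/d')\sum_{a\le y,\ a^{d'}\equiv1(p)}1$ on which the paper's proof runs.

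Second, even after regrouping, Corollary 2.2 applies only to subgroups of order exceeding $p^{C/\log\log p}$, and the complementary range cannot be ignored: the paper handles it with the trivial count plus Brun--Titchmarsh after cutting at $d<x^{\epsilon}$ with $\epsilon=4C/\log\log x$, and it is exactly this range that produces the dominant error $O(\epsilon x)=O(x/\log\log x)$, hence the theorem's $O(x/(y\log\log x))$ and the hypothesis $x/(\log x\log\log x)=o(y)$. Your proposal attributes that error to the range where Bourgain's bound \emph{is} applied (whose true contribution is $O(xe^{-\log^{C_4}x})$, negligible) and never discusses the small-subgroup range, so as written it neither closes the error estimate nor explains where the $1/\log\log x$ saving comes from.
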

    Assuming GRH for Kummer extensions $\Q(\zeta_d, a^{1/d})$ , F. Pappalardi ~\cite[Theorem 4.1]{P} proved that for increasing function $\psi(x)$ tending to infinity, $l_a(p)\geq \frac p{\psi(p)}$ for all but $O\left(\frac{\pi(x)\log \psi(x)}{\psi(\sqrt x)}\right)$ primes $p\leq x$. We prove that  unconditionally on average, a result similar to F. Pappalardi's theorem holds with restriction on $\psi$ function $\log^2 x = o(\psi(x))$.
    \begin{theorem}
    Let $\psi(x)$ be an increasing function such that $\log^2 x = o(\psi(x))$. Let $\dt$ be the positive constant in Corollary 2.4. If $x^{1-\dt}\log^3 x = o(y)$, then
    $$
    \frac1y\sum_{a<y}\sum_{\substack{{p<x}\\{l_a(p)>\frac{x}{\psi(x)}}}} 1 =  \pi(x)  + O\left(\frac{x\log x}{\psi(x)}\right)+O\left(\frac{x^{2-\dt}\log^2 x}y\right).$$
    \end{theorem}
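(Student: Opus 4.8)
Write $T:=x/\psi(x)$. The plan is to count complementarily. For a fixed $a$ the primes $p<x$ with $p\mid a$ number $\omega(a)\ll\log a$, hence
$$\sum_{\substack{p<x\\ l_a(p)>T}}1=\pi(x)-\#\{p<x:\,p\mid a\}-\#\{p<x:\,p\nmid a,\ l_a(p)\le T\}.$$
Averaging over $a<y$, the middle term contributes $\frac1y\sum_{a<y}\omega(a)=\log\log y+O(1)$, which is absorbed into $x\log x/\psi(x)$. Thus it suffices to show that the small-order count
$$N:=\sum_{a<y}\#\{p<x:\,p\nmid a,\ l_a(p)\le T\}$$
satisfies $N=O\!\left(\tfrac{yx\log x}{\psi(x)}\right)+O\!\left(x^{2-\dt}\log^2 x\right)$; division by $y$ then produces the two stated error terms.

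I would first interchange summation and stratify by exact order: $N=\sum_{p<x}\sum_{\substack{d\le T\\ d\mid p-1}}N_d(p)$, where $N_d(p):=\#\{a<y:l_a(p)=d\}$. The $\phi(d)$ residues of exact order $d$ modulo $p$ are equidistributed in $[1,y)$ up to a discrepancy, so I write $N_d(p)=\frac{y}{p}\phi(d)+\tilde R_d(p)$. The resulting main term is $y\sum_{d\le T}\phi(d)\sum_{\substack{p<x\\ d\mid p-1}}\frac1p$. Applying Brun--Titchmarsh and partial summation gives $\sum_{p<x,\,d\mid p-1}\frac1p\ll\frac{\log\log x+\log d}{\phi(d)}$ uniformly for $d\le T$, so the factor $\phi(d)$ cancels and the main term is
$$\ll y\sum_{d\le T}(\log\log x+\log d)\ll yT\log x=\frac{yx\log x}{\psi(x)},$$
which is the first claimed error after dividing by $y$.

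The heart of the argument is the discrepancy sum $\sum_{p<x}\sum_{d\le T,\,d\mid p-1}\tilde R_d(p)$, and this is where Corollary 2.4 (and ultimately Bourgain's bound) is used. Detecting the residue condition with the additive characters $e(\theta)=e^{2\pi i\theta}$, I would express $\tilde R_d(p)$ as $\frac1p\sum_{t=1}^{p-1}\big(\sum_{a<y}e(ta/p)\big)\,\overline{c_p(t,d)}$, where $c_p(t,d)$ is the additive-character sum over the $\phi(d)$ elements of exact order $d$. By Möbius inversion $c_p(t,d)=\sum_{e\mid d}\mu(d/e)\sum_{r^e=1}e(-tr/p)$ reduces to Gauss-type sums over the multiplicative subgroups of order $e\mid d$, and Corollary 2.4 supplies the saving $\big|\sum_{r^e=1}e(-tr/p)\big|\ll e^{1-\dt}$ uniformly in $t\ne0$. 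Together with $\sum_{t=1}^{p-1}\big|\sum_{a<y}e(ta/p)\big|\ll p\log x$ this yields $|\tilde R_d(p)|\ll\tau(d)\,d^{1-\dt}\log x$, whence, using $\#\{p<x:d\mid p-1\}\le x/d$,
$$\sum_{p<x}\sum_{\substack{d\le T\\ d\mid p-1}}|\tilde R_d(p)|\ll x\log x\sum_{d\le T}\tau(d)\,d^{-\dt}\ll x\log^2 x\,T^{1-\dt}\ll x^{2-\dt}\log^2 x.$$

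The main obstacle I anticipate is the uniform control of these subgroup exponential sums: Bourgain's estimate only gains for subgroups that are not too small, so the contribution of small $d$ (equivalently small subgroups, where only the trivial bound $\le e$ is available) must be isolated and shown to be dominated by the bound above, and the divisor-function losses $\tau(d)$ must be tracked carefully so that they are absorbed into the factor $\log^2 x$ rather than an extra power of $x$. Collecting the main term, the discrepancy estimate, and the negligible $O(\log\log y)$ from $p\mid a$, and noting that the hypotheses $\log^2 x=o(\psi(x))$ and $x^{1-\dt}\log^3 x=o(y)$ render both error terms $o(\pi(x))$, completes the proof.
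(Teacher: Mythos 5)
Your proposal is correct and follows essentially the same route as the paper: M\"obius inversion to pass from the exact-order condition $l_a(p)=d$ to the subgroup congruences $a^{d'}\equiv 1 \pmod p$, Bourgain's bound (packaged as Corollary 2.4) to equidistribute that subgroup in $[1,y)$, and the identity $\sum_{d\mid p-1}\phi(d)=p-1$ together with $\sum_{p<x,\,p\equiv 1\,(d)}1/p\ll(\log\log x+\log d)/\phi(d)$ for the main term; whether one complements to $l_a(p)\le x/\psi(x)$ at the outset or only at the main-term stage is immaterial, and your explicit character-sum derivation of the discrepancy is just a re-proof of Corollary 2.4. The one loose end you flag --- divisors $d'$ giving subgroups too small for Bourgain's saving --- is closed exactly as in the paper: for $d'<p^{\epsilon}$ one uses the trivial count $\frac{y}{p}d'+O(d')$, whose total contribution $\ll x^{1+2\epsilon}\log x$ is absorbed into $O(x^{2-\delta}\log^2 x)$ once $\epsilon$ is fixed sufficiently small.
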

    Assuming GRH for Kummer extensions $\Q(\zeta_d, a^{1/d})$, P. Kurlberg and C. Pomerance ~\cite{KP} showed that
    $$
    \frac{1}{\pi(x)} \sum_{p<x} l_2(p) = \frac{159}{320}cx + O\left(\frac{x}{{(\log x)}^{1-4/\log\log\log x}}\right)$$
    with $c=\prod_p \left(1-\frac p{p^3-1}\right)$. An average result over all possible nonzero residue classes is obtained by F. Luca ~\cite{L}:
    For any constant $A>0$,
    $$
    \frac{1}{\pi(x)} \sum_{p<x}\frac{1}{(p-1)^2}\sum_{a=1}^{p-1} l_a(p) = c + O\left(\frac 1{\log^A x}\right).
    $$
    By partial summation, this gives the following statistics on average order:
    $$\frac{1}{\pi(x)}\sum_{p<x}\frac{1}{p-1}\sum_{a=1}^{p-1} l_a(p) = \frac12cx + O\left(\frac{x}{\log x}\right).$$
    We prove that unconditionally on average, a similar result holds with average order $cp$.
    \begin{theorem}
    Let $A>0$ be any constant, and $\dt>0$ be the constant   in Corollary 2.4. If $x^{1-\dt}\log^3 x= o(y)$, then
    $$
    \frac1y\sum_{a<y}\sum_{\substack{{p<x}\\{p\nmid a}}}l_a(p) =  c\textrm{Li}(x^2) +O\left(\frac{x^2}{\log^A x}\right) + O\left(\frac{x^{3-\dt}\log^2 x}y\right)$$
    where
    $$
    c=\prod_p \left(1-\frac p{p^3-1}\right).$$
    \end{theorem}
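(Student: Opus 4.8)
The plan is to fix a prime $p$, exploit that $l_a(p)$ depends only on $a\bmod p$, and detect the residue classes with additive characters. Writing $e_p(u)=e^{2\pi i u/p}$ and $Y=\lfloor y\rfloor$, orthogonality gives the exact identity
\[
\sum_{\substack{a<y\\p\nmid a}}l_a(p)=\sum_{b=1}^{p-1}l_b(p)\,\#\{a\le Y:\, a\equiv b\pmod p\}=\frac1p\sum_{t=0}^{p-1}\Big(\sum_{a\le Y}e_p(ta)\Big)\Big(\sum_{b=1}^{p-1}l_b(p)e_p(-tb)\Big).
\]
The term $t=0$ is the main term $\frac Yp\sum_{b=1}^{p-1}l_b(p)=\frac Yp\sum_{d\mid p-1}d\phi(d)$, using that the number of $b$ of order exactly $d$ is $\phi(d)$. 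The terms $t\neq0$ form the error $E_p$, in which the inner geometric sum satisfies $|\sum_{a\le Y}e_p(ta)|\ll\min(y,\|t/p\|^{-1})$ (with $\|\cdot\|$ the distance to the nearest integer), and the crucial object is the weighted exponential sum $S_p(t)=\sum_{b=1}^{p-1}l_b(p)e_p(-tb)$.

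For the main term I would sum over $p<x$ and reduce to estimating $\sum_{p<x}\frac1p\sum_{d\mid p-1}d\phi(d)$. Setting $G(p)=\sum_{d\mid p-1}d\phi(d)=\sum_{a=1}^{p-1}l_a(p)$, I factor $\frac{G(p)}p=\frac{(p-1)^2}{p}\cdot\frac{G(p)}{(p-1)^2}$ and apply partial summation against Luca's average $\sum_{p<x}\frac{G(p)}{(p-1)^2}=c\,\pi(x)+O(\pi(x)/\log^A x)$ from \cite{L}. Since $\frac{(p-1)^2}p=p+O(1)$, the lower-order pieces contribute only $O(x/\log x)$, while the leading piece becomes $c\sum_{p<x}p$ up to the claimed error; combining with $\sum_{p<x}p=\textrm{Li}(x^2)+O(x^2/\log^A x)$ yields $c\,\textrm{Li}(x^2)+O(x^2/\log^A x)$. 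The discrepancy between $Y$ and $y$ multiplies the main term by $1+O(1/y)$ and is absorbed into the second error term.

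The heart of the proof is the error term $E_p$, and this is where I expect the main difficulty. The trivial bound $|S_p(t)|\le\sum_b l_b(p)\ll p^2$ is useless: combined with $\sum_{t\neq0}|\sum_a e_p(ta)|\ll p\log x$ it only gives $|E_p|\ll p^2\log x$, whose sum over $p<x$ is of order $x^3/\log x$, dwarfing the main term. Genuine cancellation in $S_p(t)$ is therefore essential. To obtain it I would expand $l_b(p)=\sum_{d\mid p-1}d\,\mathbf{1}[\mathrm{ord}_p(b)=d]$ and detect the generators of the order-$d$ subgroup by M\"obius inversion over subgroups, so that $S_p(t)$ becomes a weighted combination of complete sums $\sum_{b\in H_e}e_p(-tb)$ over the multiplicative subgroups $H_e\le\F_p^{\times}$. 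Each such subgroup sum enjoys a power saving by Bourgain's estimate \cite{B} once $e$ exceeds a small power of $p$, the small subgroups being negligible; this is exactly the content of Corollary 2.4, which I would invoke to conclude $|S_p(t)|\ll p^{2-\dt}$ uniformly in $t\not\equiv0$.

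Finally I would assemble the pieces. Using $|E_p|\le\frac1p\big(\max_{t\neq0}|S_p(t)|\big)\sum_{t=1}^{p-1}|\sum_{a\le Y}e_p(ta)|\ll\frac1p\cdot p^{2-\dt}\cdot p\log x$ and summing over $p<x$ bounds $\sum_{p<x}|E_p|$ by $O(x^{3-\dt}\log^2 x)$, after dividing by $y$ giving the term $O(x^{3-\dt}\log^2 x/y)$. The hypothesis $x^{1-\dt}\log^3 x=o(y)$ is precisely what forces this error to be $o(x^2/\log x)$, i.e. smaller than the main term $c\,\textrm{Li}(x^2)$. Collecting the main term and the two error terms, and dividing by $y$, completes the proof.
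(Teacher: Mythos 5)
Your proposal is correct and is essentially the paper's argument in different packaging: the main term is handled identically (the count $\phi(d)$ of elements of order $d$, then Luca's theorem plus partial summation giving $c\,\textrm{Li}(x^2)$), and the error term rests on the same key input, namely Bourgain's subgroup exponential sum bound after M\"obius inversion from ``order exactly $d$'' to complete sums over the $d'$-th roots of unity, with the small subgroups $d'\leq p^{\epsilon}$ bounded trivially. The only difference is organizational --- you keep $p$ outermost and perform the interval completion by hand with additive characters and the $\min(Y,\|t/p\|^{-1})$ bound, whereas the paper sums over $d$ first and quotes Corollary 2.4, which packages that same completion step --- and both routes land on the same error terms $O(x^{2+2\epsilon})$ and $O(x^{3-\dt}\log^2 x)$.
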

    In the form of P. Kurberg and C. Pomerance's result, this is
    \begin{corr}
    Let  $\dt>0$ be the constant in Corollary 2.4. If $x^{1-\dt}\log^3 x = o(y)$, then
    $$
    \frac1y\sum_{a<y}\frac{1}{\pi(x)}\sum_{\substack{{p<x}\\{p\nmid a}}}l_a(p) =  \frac12c x +O\left(\frac{x }{\log  x}\right) + O\left(\frac{x^{2-\dt}\log^3 x}y\right).$$
    \end{corr}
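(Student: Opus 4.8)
The plan is to obtain Corollary 1.4 as a direct consequence of Theorem 1.3, since the left-hand side of the corollary is exactly the left-hand side of the theorem divided by $\pi(x)$. Thus the only genuine work is to understand the quotient of the main term $c\,\textrm{Li}(x^2)$ by $\pi(x)$, together with the rescaling of the two error terms. First I would record the standard asymptotic expansions
$$
\textrm{Li}(x^2)=\frac{x^2}{2\log x}+\frac{x^2}{4\log^2 x}+O\left(\frac{x^2}{\log^3 x}\right),\qquad \pi(x)=\frac{x}{\log x}+\frac{x}{\log^2 x}+O\left(\frac{x}{\log^3 x}\right),
$$
the second of which follows from the prime number theorem with the classical de la Vall\'ee-Poussin error term (which is $O(x/\log^C x)$ for every $C$).

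Next I would divide these expansions. Writing each as $\frac{x}{\log x}$ times a bracket, the ratio becomes
$$
\frac{\textrm{Li}(x^2)}{\pi(x)}=\frac{x}{2}\cdot\frac{1+\frac{1}{2\log x}+O(\log^{-2}x)}{1+\frac{1}{\log x}+O(\log^{-2}x)}=\frac{x}{2}\left(1-\frac{1}{2\log x}+O\left(\frac{1}{\log^2 x}\right)\right)=\frac{x}{2}+O\left(\frac{x}{\log x}\right).
$$
Multiplying by $c$ produces the main term $\frac12 c x$ of the corollary with an admissible error $O(x/\log x)$; note that keeping the second-order term in both expansions is essential, since the two $\frac{1}{\log x}$ contributions partially cancel and it is only their difference that controls the size of the correction.

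Finally I would rescale the error terms of Theorem 1.3. Applying the theorem with any fixed $A\ge 2$ and using $\pi(x)\asymp x/\log x$, the first error term becomes
$$
\frac{1}{\pi(x)}\,O\left(\frac{x^2}{\log^A x}\right)=O\left(\frac{x}{\log^{A-1}x}\right),
$$
which is absorbed into $O(x/\log x)$, while the second becomes
$$
\frac{1}{\pi(x)}\,O\left(\frac{x^{3-\dt}\log^2 x}{y}\right)=O\left(\frac{x^{2-\dt}\log^3 x}{y}\right),
$$
matching the claimed error. Since no new analytic input beyond Theorem 1.3 and the prime number theorem is required, there is no real obstacle here; the only point demanding care is the bookkeeping of the second-order terms in the expansion of $\textrm{Li}(x^2)/\pi(x)$ described above.
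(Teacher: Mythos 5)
Your proposal is correct and follows essentially the same route as the paper, which simply divides Theorem 1.3 by $\pi(x)$ and invokes the relation $\textrm{Li}(x^2)=\frac12 x\pi(x)+O\!\left(\frac{x^2}{\log^2 x}\right)$; your explicit expansion of $\textrm{Li}(x^2)/\pi(x)$ is just an unpacked version of that identity. One small quibble: the second-order terms are not actually essential for the stated $O(x/\log x)$ error, since the first-order expansions $\textrm{Li}(x^2)=\frac{x^2}{2\log x}\left(1+O(\log^{-1}x)\right)$ and $\pi(x)=\frac{x}{\log x}\left(1+O(\log^{-1}x)\right)$ already yield $\frac{x}{2}+O\!\left(\frac{x}{\log x}\right)$.
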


   \section{Backgrounds}
    \subsection{Equidistribution}
    A sequence $\{a_n\}$ of real numbers are said to be equidistributed modulo $1$ if the following is satisfied:
    \begin{defi}
    Let $0\leq a<b\leq 1$. Suppose that
    $$
    \lim_{N\rightarrow\infty}\frac1N|\{n\leq N \ : \ a_n \in (a,b) \textrm{ mod $1$}\}| = b-a.$$
    Then we say that $\{a_n\}$ is equidistributed modulo $1$.
    \end{defi}
    A well-known criterion by Weyl ~\cite{W} is
    \begin{theorem}
    For any integer $k\neq 0$, suppose that
    $$
    \lim_{N\rightarrow\infty}\frac1N \sum_{n\leq N}e^{2\pi i ka_n } = 0.$$
    Then the sequence $\{a_n\}$ is equidistributed modulo $1$.
    \end{theorem}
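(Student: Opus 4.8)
The plan is to reduce the interval-counting definition of equidistribution to a statement about averaging suitable test functions against Lebesgue measure on $[0,1]$, and then to deduce that statement for indicator functions of intervals from the hypothesis on exponential sums. First I would observe that the hypothesis, supplemented by the trivial case $k=0$ (where both $\frac1N\sum_{n\le N} e^{2\pi i\cdot 0\cdot a_n}$ and $\int_0^1 1\,dt$ equal $1$), gives
$$\lim_{N\to\infty}\frac1N\sum_{n\le N} e^{2\pi i k a_n} = \int_0^1 e^{2\pi i k t}\,dt \quad\text{for every } k\in\Z.$$
By linearity this immediately extends to every trigonometric polynomial $P(t)=\sum_{|k|\le K} c_k e^{2\pi i kt}$, namely $\frac1N\sum_{n\le N} P(a_n)\to\int_0^1 P(t)\,dt$.

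Next I would upgrade this to all continuous $1$-periodic functions $g$. Given $\varepsilon>0$, the Weierstrass approximation theorem in its trigonometric form furnishes a trigonometric polynomial $P$ with $\|g-P\|_\infty<\varepsilon$. A standard three-term estimate then bounds
$$\left|\frac1N\sum_{n\le N} g(a_n)-\int_0^1 g\right| \le \frac1N\sum_{n\le N}|g(a_n)-P(a_n)| + \left|\frac1N\sum_{n\le N} P(a_n)-\int_0^1 P\right| + \left|\int_0^1(P-g)\right|,$$
whose right-hand side is at most $2\varepsilon+o(1)$ as $N\to\infty$; letting $\varepsilon\to0$ yields $\frac1N\sum_{n\le N} g(a_n)\to\int_0^1 g$ for all such $g$.

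Finally, to recover the definition I would sandwich the indicator $\mathbf{1}_{(a,b)}$ of the fixed interval between continuous $1$-periodic functions. For small $\eta>0$ I would build piecewise-linear (trapezoidal) functions $f_-\le\mathbf{1}_{(a,b)}\le f_+$ with $\int_0^1(f_+-f_-)\le 2\eta$, so that $\int_0^1 f_\pm = (b-a)\pm O(\eta)$. Applying the previous paragraph to $f_-$ and $f_+$ and using monotonicity of the averages gives
$$b-a-O(\eta)\le\liminf_{N\to\infty}\frac1N|\{n\le N: a_n\in(a,b)\bmod 1\}|\le\limsup_{N\to\infty}\frac1N|\{n\le N: a_n\in(a,b)\bmod 1\}|\le b-a+O(\eta);$$
letting $\eta\to0$ produces the required limit $b-a$, which is precisely equidistribution.

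The only genuinely non-elementary input is the trigonometric Weierstrass theorem used in the second step; the remaining work—verifying the $k=0$ case, the linearity step, and the explicit trapezoidal sandwich—is routine. The main obstacle, such as it is, lies in arranging the approximating functions $f_\pm$ to be genuinely continuous and $1$-periodic when the interval abuts an endpoint $0$ or $1$, but this is handled by extending $\mathbf{1}_{(a,b)}$ periodically to $\R$ and smoothing across the break.
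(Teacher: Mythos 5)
Your proof is correct and complete: the chain exponentials $\to$ trigonometric polynomials $\to$ continuous periodic functions (via trigonometric Weierstrass) $\to$ indicators of intervals (via the trapezoidal sandwich) is the standard argument for Weyl's criterion, and you handle the one real technical point --- keeping the approximating functions continuous and $1$-periodic when the interval abuts $0$ or $1$ --- correctly. Note, however, that the paper itself gives no proof of this statement: it is quoted as background (Theorem 2.1) with a citation to Weyl's original work, so there is no ``paper proof'' to compare against. It is worth observing that for the paper's actual purposes the qualitative criterion you proved is never used directly; what is used is the quantitative Erd\H{o}s--Tur\'{a}n inequality (Theorem 2.2), which bounds the discrepancy
$$
\sup_{0\leq a<b\leq 1} \left| |\{ n\leq N : a_n \in (a,b) \textrm{ mod } 1 \}| - (b-a)N \right|
$$
explicitly in terms of the exponential sums $\sum_{n\leq N} e^{2\pi i m a_n}$, and hence implies the soft statement you proved as an immediate corollary while also giving rates. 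Your approximation argument, by contrast, yields no rate --- the price of its elementary character --- but that is all the theorem as stated demands.
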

    There were a series of effort to obtain the quantitative form of equidistribution theorem. Erd\"{o}s and Tur\'{a}n ~\cite{ET} succeed in obtaining such form:
    \begin{theorem}
    Let $\{a_n\}$ be a sequence of real numbers. Then
    $$
    \sup_{0\leq a<b\leq 1} \left||\{ n\leq N :  a_n \in (a,b) \textrm{ mod $1$} \}| - (b-a)N\right| \leq c_1 \frac{N}{M+1} + c_2 \sum_{m=1}^M\frac1m \left|\sum_{n\leq N}e^{2\pi i m a_n}\right|.
    $$
    \end{theorem}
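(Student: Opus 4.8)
This is the Erd\H{o}s--Tur\'an inequality; the plan is to estimate the counting discrepancy by Fourier analysis on $\R/\Z$, replacing the indicator of the arc $(a,b)$ by trigonometric polynomials of degree at most $M$ that can be paired with the exponential sums $\sum_{n\le N}e^{2\pi i m a_n}$. The obstruction is that the indicator $\chi_{(a,b)}$ of an arc is not a trigonometric polynomial, so the first step is to sandwich it. I would invoke the Beurling--Selberg extremal construction in Vaaler's trigonometric form: for every arc $(a,b)$ and every $M$ there are real trigonometric polynomials $P^-_M,P^+_M$ of degree at most $M$ with
\begin{equation*}
P^-_M(t)\le \chi_{(a,b)}(t)\le P^+_M(t)\quad\text{for all }t\in\R/\Z,
\end{equation*}
whose constant Fourier coefficient satisfies $\widehat{P^{\pm}_M}(0)=(b-a)\pm\frac{1}{M+1}$ and whose remaining coefficients obey $\big|\widehat{P^{\pm}_M}(m)\big|\le \frac{1}{\pi|m|}+\frac{1}{M+1}$ for $1\le|m|\le M$.

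Granting such polynomials, I would evaluate them at the points $a_1,\dots,a_N$ and sum, using the sandwiching to trap the count:
$$\sum_{n\le N}P^-_M(a_n)\le \big|\{n\le N:a_n\in(a,b)\}\big|\le \sum_{n\le N}P^+_M(a_n).$$
Expanding each $P^{\pm}_M$ in its finite Fourier series and swapping the two finite sums gives
$$\sum_{n\le N}P^{\pm}_M(a_n)=\widehat{P^{\pm}_M}(0)\,N+\sum_{1\le|m|\le M}\widehat{P^{\pm}_M}(m)\sum_{n\le N}e^{2\pi i m a_n}.$$
The constant term contributes $(b-a)N\pm\frac{N}{M+1}$, producing the first error term $c_1\frac{N}{M+1}$. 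For the remaining coefficients I would use the bound above together with $\big|\sum_{n}e^{-2\pi i m a_n}\big|=\big|\sum_{n}e^{2\pi i m a_n}\big|$ and the elementary fact that $\frac{1}{M+1}\le\frac1m$ for $1\le m\le M$; both the $\frac{1}{\pi|m|}$ and the $\frac{1}{M+1}$ pieces then fold into a single sum $c_2\sum_{m=1}^M\frac1m\big|\sum_{n\le N}e^{2\pi i m a_n}\big|$. Subtracting $(b-a)N$ from the upper and lower bounds and taking the supremum over $0\le a<b\le1$ yields the inequality with absolute constants $c_1,c_2$.

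The main obstacle is the extremal construction itself: producing, for a prescribed arc, trigonometric polynomials that dominate (resp.\ are dominated by) $\chi_{(a,b)}$ \emph{pointwise everywhere}, not merely in mean, while keeping the degree at $M$, the $L^1$-defect of order $1/(M+1)$, and the coefficient decay of order $1/|m|$. The cleanest route is Vaaler's explicit polynomials, assembled from the Beurling function and the Fej\'er kernel: one builds the Selberg majorant and minorant of the indicator of a half-line, takes a difference of two translates to localize to the arc $(a,b)$, periodizes, and truncates the Fourier expansion at degree $M$; verifying the sign condition and the uniform coefficient bound is where all the analytic effort concentrates. A more self-contained but lossier alternative is to convolve $\chi_{(a,b)}$ with the Fej\'er kernel of order $M$ to get the coefficient decay directly, then restore the one-sided inequality by nudging the endpoints inward or outward by $1/(M+1)$, at the price of larger absolute constants. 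Either way, once the majorant and minorant are in hand the reduction above is routine.
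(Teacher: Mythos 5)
The paper offers no proof of this statement: it is quoted as the classical Erd\H{o}s--Tur\'an inequality, with the references [ET], [M], [MRS] supplying the proof and the explicit constants, so there is no internal argument to compare yours against. Judged on its own, your route is the standard modern one (Montgomery's, in the very reference [M] the paper cites for $c_1=1$, $c_2=3$): sandwich $\chi_{(a,b)}$ between Vaaler's degree-$M$ majorant and minorant, evaluate at the points $a_n$, and read off the discrepancy from the Fourier coefficients. The reduction you describe is correct and complete, and with the coefficient bounds you state it even recovers $c_1=1$ and $c_2=2\left(1+\tfrac{1}{\pi}\right)<3$. Two caveats. First, the entire analytic content lives in the extremal construction, which you cite rather than carry out; that is a legitimate choice, but it means your write-up, like the paper's, ultimately defers to the literature for the hard step. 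Second, your proposed ``self-contained but lossier'' fallback via Fej\'er-kernel smoothing does not merely cost larger absolute constants: because the Fej\'er kernel of order $M$ puts mass $\asymp \frac{1}{(M+1)\delta}$ outside $|t|\le\delta$, restoring a genuine pointwise majorant after widening the arc by $\delta$ forces an error $\asymp N\left(\delta+\frac{1}{(M+1)\delta}\right)$, which optimizes to $N/\sqrt{M+1}$ rather than $N/(M+1)$. So that alternative proves a strictly weaker inequality, and only the Beurling--Selberg--Vaaler route (or the original Erd\H{o}s--Tur\'an argument) delivers the stated first term.
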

    H. Montgomery ~\cite{M} obtained $c_1=1$, $c_2=3$. C. Manduit, J. Rivat, A. S\'{a}rk\H{o}zy ~\cite{MRS} obtained $c_1=c_2=1$. Thus, we have a quantitative upper bound of discrepancy when we have good upper bounds for exponential sums.
    \subsection{Exponential Sums in Prime Fields}
    J. Bourgain ~\cite{B} obtained the following equidistribution result for the subgroup $H<\F_p^{*}$ when $|H|>p^{\frac{C}{\log\log p}}$ for some absolute constant $C>1$ by sum-product method. See also ~\cite{BG}.
    \begin{theorem}
    Let $p$ be a prime. There exist  absolute constants $C>1$ and $C_1>0$ such that for any subgroup $H$ of $\F_p^{*}$ with $|H|>p^{\frac{C}{\log\log p}}$,
    $$
    \max_{(k,p)=1}\left|\sum_{a\in H}e^{2\pi i k\frac {a}p}\right|<e^{-\log^{C_1}p}|H|.$$
    \end{theorem}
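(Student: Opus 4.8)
The plan is to interchange the two summations and treat a single prime at a time:
$$
\frac1y\sum_{a<y}\sum_{\substack{p<x\\p\nmid a}}l_a(p)=\frac1y\sum_{p<x}\sum_{\substack{a<y\\p\nmid a}}l_a(p).
$$
For a fixed $p$ the quantity $l_a(p)$ depends only on $a\bmod p$, so the inner sum runs over a near-complete collection of residues: each of the $p-1$ nonzero classes is hit $\frac yp+O(1)$ times. I would therefore split the inner sum into its complete-period main term plus an error,
$$
\sum_{\substack{a<y\\p\nmid a}}l_a(p)=\frac yp\sum_{r=1}^{p-1}l_r(p)+\Delta_p,\qquad \sum_{r=1}^{p-1}l_r(p)=\sum_{d\mid p-1}d\,\phi(d),
$$
the last identity because $\F_p^{*}$ has exactly $\phi(d)$ elements of order $d$ for each $d\mid p-1$. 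After multiplying by $\frac1y$ the factor $y$ cancels in the main term, reducing the leading behaviour to $\sum_{p<x}\frac1p\sum_{r=1}^{p-1}l_r(p)$.

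For this main term I would borrow Luca's evaluation quoted in the introduction, $\sum_{p<x}\frac{1}{(p-1)^2}\sum_{r=1}^{p-1}l_r(p)=c\,\pi(x)+O(\pi(x)/\log^A x)$. Writing
$$
\frac1p\sum_{r=1}^{p-1}l_r(p)=\frac{(p-1)^2}{p}\cdot\frac{1}{(p-1)^2}\sum_{r=1}^{p-1}l_r(p)
$$
and applying partial summation against the smooth weight $\frac{(p-1)^2}{p}=p-2+\frac1p$ converts Luca's estimate into $c\sum_{p<x}\bigl(p-2+\frac1p\bigr)+O(x^2/\log^A x)$. The prime number theorem gives $\sum_{p<x}p=\textrm{Li}(x^2)+O(x^2/\log^A x)$ (substitute $u=t^2$ in $\int_2^x t\,d\pi(t)$), while $\sum_{p<x}2=O(x/\log x)$ and $\sum_{p<x}\frac1p=O(\log\log x)$, so the main term is exactly $c\,\textrm{Li}(x^2)+O(x^2/\log^A x)$.

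The core of the argument is the uniform control of $\Delta_p$. Grouping $\F_p^{*}$ by order gives
$$
\Delta_p=\sum_{d\mid p-1}d\Bigl(\#\{a<y:\ a\bmod p\text{ has order }d\}-\tfrac yp\phi(d)\Bigr),
$$
and the elements of exact order $d$ have indicator $\sum_{e\mid d}\mu(d/e)\mathbf{1}[a\in H_e]$, where $H_e<\F_p^{*}$ is the cyclic subgroup of order $e$. I would complete each subgroup count by orthogonality,
$$
\#\{a<y:\ a\bmod p\in H_e\}=\frac{ye}{p}+\frac1p\sum_{k=1}^{p-1}\Bigl(\sum_{a<y}e^{2\pi i ka/p}\Bigr)\Bigl(\sum_{h\in H_e}e^{-2\pi i kh/p}\Bigr),
$$
and estimate the two factors separately: $\bigl|\sum_{a<y}e^{2\pi i ka/p}\bigr|\le\min(y,p/k)$, whence $\sum_{k=1}^{p-1}\min(y,p/k)\ll p\log p$, while Bourgain's Theorem~2.3 gives $\bigl|\sum_{h\in H_e}e^{-2\pi i kh/p}\bigr|<e^{-\log^{C_1}p}\,e$ uniformly in $(k,p)=1$ as soon as $e>p^{C/\log\log p}$. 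The subgroups with $e\le p^{C/\log\log p}$, where Bourgain does not apply, account only for the $\le\sum_{e\le p^{C/\log\log p}}\phi(e)\ll p^{2C/\log\log p}=p^{o(1)}$ elements of small order, whose total contribution to $\frac1y\sum_{p<x}\Delta_p$ is $O(x^{o(1)})$ and hence harmless. Assembling the large-subgroup bound with the trivial small-order count is precisely the content of Corollary~2.4, which I would invoke to obtain a per-prime estimate of the shape $|\Delta_p|\ll p^{2-\dt}\log^3 x$; summing via $\sum_{p<x}p^{2-\dt}\ll x^{3-\dt}/\log x$ and dividing by $y$ produces the claimed error $O\bigl(x^{3-\dt}\log^2 x/y\bigr)$, which is $o(x^2/\log x)$ under the hypothesis $x^{1-\dt}\log^3 x=o(y)$.

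The hard part will be the uniformity in $d$ of the bound for $\Delta_p$. Because the divisors $d\mid p-1$ enter weighted by $d$ (as large as $p-1$) and there are up to $\sigma_0(p-1)$ of them, the super-polynomial saving $e^{-\log^{C_1}p}$ coming from Bourgain must be strong enough to absorb simultaneously the factor $\sigma(p-1)\ll p\log\log p$ generated by these weights and the $\log p$ lost in completing the sums, all while the small-order elements are excised without eroding the power saving $x^{-\dt}$. Balancing these is exactly where the constant $\dt$ of Corollary~2.4 is consumed, and it is the only step of the proof that is not routine partial summation.
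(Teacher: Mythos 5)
Your text does not address the statement at all. The statement in question is Bourgain's theorem (Theorem 2.3 of the paper): a bound of the form $e^{-\log^{C_1}p}|H|$ for the exponential sum $\sum_{a\in H}e^{2\pi i ka/p}$ over an arbitrary multiplicative subgroup $H<\F_p^{*}$ of size $|H|>p^{C/\log\log p}$, uniformly in $(k,p)=1$. What you have written instead is a proof sketch of Theorem 1.3 of the paper (the average of $l_a(p)$ over $a<y$ and $p<x$ equaling $c\,\textrm{Li}(x^2)$ plus error terms). Worse, your argument is circular relative to the assigned statement: midway through you invoke ``Bourgain's Theorem~2.3'' and Corollary~2.4 as the key input controlling $\Delta_p$ --- that is, you assume precisely the result you were asked to prove.

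Beyond the mismatch, you should be aware that this statement is not something one can prove with the tools appearing in your sketch (orthogonality, completing sums, partial summation, Luca's theorem). It is a deep theorem in additive combinatorics, established by Bourgain via the sum--product method (growth of product and sum sets in $\F_p$, Balog--Szemer\'edi--Gowers type arguments), and the paper itself does not reprove it: it quotes it directly from Bourgain's work, as it must. A correct response here would either reproduce the sum--product argument (far beyond routine analytic manipulations) or explicitly recognize the statement as an external black box; estimating discrepancies of order counts, as you do, cannot yield a nontrivial cancellation bound for a character sum over a sparse subgroup of size $p^{o(1)}$, since all such counting arguments lose at least a constant factor per residue class and give nothing below the trivial bound $|H|$.
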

    Since any subgroup $H$ of $\F_p^{*}$ is cyclic, we consider $|H|=d | p-1$. Then $H$ consists of all $d$-th roots of unity in $\F_p$. This yields
    \begin{corr}
    Let $p$ be a prime and $1\leq d | p-1$. Suppose that $d > p^{\frac{C}{\log\log p}}$. Then we have
    $$
    \max_{(m,p)=1}\left|\sum_{a\in\F_p, \   a^d= 1}  e^{2\pi i m\frac {a}p}\right|<de^{-\log^{C_1}p} .$$
    \end{corr}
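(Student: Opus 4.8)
The plan is to reduce the corollary directly to Theorem 2.4 by identifying the solution set of $a^d = 1$ with an explicit subgroup of $\F_p^{*}$. First I would invoke the cyclic structure of $\F_p^{*}$: since $\F_p^{*}$ is cyclic of order $p-1$ and $d \mid p-1$, it contains a unique subgroup $H$ of order exactly $d$. The next step is to check that $H$ coincides with the set $\{a \in \F_p : a^d = 1\}$. Every element of $H$ satisfies $a^d = 1$ because the order of an element divides the order of the group, while conversely the polynomial $x^d - 1$ has at most $d$ roots in the field $\F_p$; as $|H| = d$ already exhibits $d$ distinct roots, the two sets agree, and in particular the number of $d$-th roots of unity is exactly $d$.

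With this identification in hand, the hypothesis $d > p^{\frac{C}{\log\log p}}$ is precisely the condition $|H| > p^{\frac{C}{\log\log p}}$ demanded by Theorem 2.4. Applying that theorem (with the free variable $k$ in its statement playing the role of $m$) gives
$$\max_{(m,p)=1}\left|\sum_{a \in H} e^{2\pi i m\frac{a}{p}}\right| < e^{-\log^{C_1} p}\,|H|,$$
and substituting $|H| = d$ together with the equality of $H$ with the set of $d$-th roots of unity yields the claimed bound $d\,e^{-\log^{C_1} p}$.

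There is essentially no genuine obstacle here, since the entire analytic content is carried by Bourgain's deep equidistribution estimate (Theorem 2.4); the corollary is a formal restatement once one observes that the level set of the $d$-th power map equal to $1$ on $\F_p^{*}$ is exactly the subgroup of $d$-th roots of unity. The only point that requires a word of care is the count $|H| = d$, which relies on $\F_p$ being a field so that $x^d - 1$ splits with $d$ simple roots and no more.
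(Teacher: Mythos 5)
Your argument is correct and is essentially the paper's own (one-sentence) deduction: since $\F_p^{*}$ is cyclic, the set $\{a\in\F_p : a^d=1\}$ is exactly the unique subgroup $H$ of order $d$, and Bourgain's estimate (the theorem with the threshold $|H|>p^{\frac{C}{\log\log p}}$, which is Theorem 2.3 in the paper's numbering rather than 2.4) applies verbatim. The only difference is that you make explicit the verification $|H|=d$ via the root count of $x^d-1$ over a field, which the paper leaves implicit.
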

    Combining this with Erd\"{o}s-Tur\'{a}n inequality, we obtain the following
    \begin{corr}
    Let $p$ be a prime, and $y\geq 1$. Assume that $d|p-1$ and $d > p^{\frac{C}{\log\log p}}$. Then for any constant $C_2>0$ smaller than $C_1$ in Corollary 2.1, we have
    $$
    \sum_{a<y, \ a^d\equiv 1 (p)} 1 = \frac yp d + O(de^{-\log^{C_2}p}).
    $$
    \end{corr}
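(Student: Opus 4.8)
The plan is to convert the counting problem into a discrepancy estimate for the point set $\{a/p : a\in\F_p,\ a^d=1\}$ and then feed the exponential sum bound of Corollary 2.1 into the Erd\"{o}s--Tur\'{a}n inequality (Theorem 2.3 with $c_1=c_2=1$). Write $H=\{a\in\F_p : a^d=1\}$, so that $|H|=d$ since $d\mid p-1$, and note $0\notin H$. I would first decompose $[0,y)$ into $Q=\lfloor y/p\rfloor$ complete intervals of length $p$ together with a terminal segment $[Qp,y)$ of length $\rho=y-Qp<p$. Each complete interval contains exactly $d$ integers whose residue lies in $H$, contributing $Qd$, while the terminal segment contributes $\#\{h\in H : h<\rho\}$. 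Since $\frac yp d=Qd+\frac{\rho}p d$, the entire error reduces to
$$
\sum_{a<y,\ a^d\equiv 1 (p)} 1 \ -\ \frac yp d \ =\ \#\{h\in H : h/p\in[0,\rho/p)\}\ -\ \frac{\rho}p d,
$$
which is precisely the discrepancy of the family $\{h/p : h\in H\}$ against the subinterval $[0,\rho/p)$.

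Next I would apply Theorem 2.3 to the $N=d$ points $a_h=h/p$, with a parameter $M$ to be chosen, obtaining
$$
\left|\#\{h\in H : h/p\in[0,\rho/p)\}-\frac{\rho}p d\right|\le \frac d{M+1}+\sum_{m=1}^{M}\frac1m\left|\sum_{h\in H}e^{2\pi i m h/p}\right|.
$$
Provided $M<p$, every $m$ with $1\le m\le M$ is coprime to $p$, so Corollary 2.1 bounds each inner sum by $d\,e^{-\log^{C_1}p}$, yielding
$$
\sum_{m=1}^{M}\frac1m\left|\sum_{h\in H}e^{2\pi i m h/p}\right|<d\,e^{-\log^{C_1}p}\sum_{m\le M}\frac1m\ll d\,e^{-\log^{C_1}p}\log M.
$$

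The final step is to choose $M$ optimally. Taking $M=\lfloor e^{\log^{C_2}p}\rfloor$ makes the first term satisfy $\frac d{M+1}\ll d\,e^{-\log^{C_2}p}$, while $\log M\asymp\log^{C_2}p$ turns the second term into $d\,e^{-\log^{C_1}p}\log^{C_2}p$; since $C_2<C_1$, the factor $e^{\log^{C_2}p-\log^{C_1}p}$ decays faster than the polynomial $\log^{C_2}p$ grows, so this contribution is $o(d\,e^{-\log^{C_2}p})$. Both error terms are therefore $O(d\,e^{-\log^{C_2}p})$, which is the claim. The one constraint I must verify is $M<p$, i.e. $\log^{C_2}p<\log p$ for large $p$; this holds because Bourgain's exponent $C_1$, and hence $C_2<C_1$, is less than $1$, so every term of the Erd\"{o}s--Tur\'{a}n sum indeed has $(m,p)=1$ and Corollary 2.1 applies uniformly. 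I expect this check that $M<p$ --- guaranteeing that no term is spoiled by $p\mid m$ (for which the exponential sum would equal $d$ with no cancellation) --- to be the only genuinely delicate point, the remainder being the routine discrepancy bookkeeping above.
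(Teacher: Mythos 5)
Your proof is correct and follows essentially the same route as the paper: reduce to $y<p$ by counting complete periods, then apply the Erd\H{o}s--Tur\'{a}n inequality to $\{a/p: a^d=1\}$ with the exponential sum bound of Corollary 2.1. The only (immaterial) difference is the cutoff: the paper takes $M=p-1$ and absorbs the resulting $d/p$ into $de^{-\log^{C_2}p}$ using $C_2<1$, whereas you take $M=\lfloor e^{\log^{C_2}p}\rfloor$ and verify $M<p$ for the same reason; both choices yield the stated bound.
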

    \begin{proof}
      Since $d|p-1$, the congruence $a^d\equiv 1$ yields $d$ roots in $\F_p$. Thus, we need to count $a<y$ satisfying those $d$ congruences modulo $p$. Considering $\frac yp = \lfloor \frac yp \rfloor + \frac yp - \lfloor \frac yp \rfloor$, it is enough to prove the result for $y<p$. We apply Erd\"{o}s-Tur\'{a}n inequality to the set $\{\frac ap : a^d = 1\}$. Then
    \begin{align*}
    \left||\{ 0\leq a\leq p-1 :  a^d\equiv 1 (p), \frac ap \in (0,\frac yp) \textrm{ mod $1$} \}| - \frac yp d\right| &\leq   \frac{d}{p} +   \sum_{m=1}^{p-1}\frac1m \left|\sum_{a \leq p-1 , \ a^d\equiv 1 (p)}e^{2\pi i m \frac ap}\right|\\
    &\leq \frac{d}{p} + (2\log p) de^{-\log^{C_1}p}\\
    &\leq d e^{-\log^{C_2}p}.
    \end{align*}
    This completes the proof.  \end{proof}

    For the Theorem 1.2 and 1.3, we need J. Bourgain's result when the subgroup $H$ has order greater than $p^{\epsilon}$ for fixed $\epsilon>0$.
    \begin{theorem}
    Let $p$ be a prime. For any fixed $\epsilon>0$, There exist  a constant $\dt=\dt(\epsilon)>0$ such that for any subgroup $H$ of $\F_p^{*}$ with $|H|>p^{\epsilon}$,
    $$
    \max_{(k,p)=1}\left|\sum_{a\in H}e^{2\pi i k\frac {a}p}\right|<p^{-\dt}|H|.$$
    \end{theorem}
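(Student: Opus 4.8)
The plan is to reduce the statement to the sum--product exponential sum bound of Bourgain, Glibichuk and Konyagin and then to convert the \emph{relative} gain it provides into the \emph{absolute} $p^{-\dt}$ gain claimed here. That body of work produces, for every fixed $\epsilon>0$, a constant $\eta=\eta(\epsilon)>0$ such that every multiplicative subgroup $H<\F_p^{*}$ with $|H|>p^{\epsilon}$ satisfies $\max_{(k,p)=1}\left|\sum_{a\in H}e^{2\pi i ka/p}\right|<|H|^{1-\eta}$. Granting this, the Theorem is immediate: since $|H|>p^{\epsilon}$ we have $|H|^{-\eta}<p^{-\epsilon\eta}$, so $|H|^{1-\eta}=|H|\cdot|H|^{-\eta}<p^{-\epsilon\eta}|H|$, which gives the claim with $\dt=\epsilon\eta$. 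Thus essentially all of the content lies in the relative estimate, and the last step is a one-line substitution.

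To prove the relative estimate I would run the standard energy/moment argument. Write $S(k)=\sum_{a\in H}e^{2\pi i ka/p}$ and observe that $S$ is invariant along multiplicative cosets: for $t\in H$ the substitution $a\mapsto ta$ permutes $H$, so $|S(tk)|=|S(k)|$. Hence, if $S=\max_{(k,p)=1}|S(k)|$ is attained, it is attained simultaneously on all $|H|$ elements of one coset, and for any integer $\nu\geq1$,
$$
|H|\,S^{2\nu}\leq\sum_{k=1}^{p-1}|S(k)|^{2\nu}\leq\sum_{k\in\F_p}|S(k)|^{2\nu}=p\,J_\nu(H),
$$
where $J_\nu(H)$ counts the solutions of $a_1+\cdots+a_\nu=b_1+\cdots+b_\nu$ with all variables in $H$, the last equality being orthogonality. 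This already yields $S\leq(p\,J_\nu(H)/|H|)^{1/2\nu}$, so everything is reduced to proving a power saving of the shape $J_\nu(H)\leq|H|^{2\nu-1-\rho}$ for a suitable fixed $\nu$ and some $\rho=\rho(\epsilon)>0$, improving on the bound $\approx|H|^{2\nu-1}$ that holds for additively structured sets.

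The saving in $J_\nu(H)$ is exactly where the sum--product phenomenon enters, and this is the hard part. Because $H$ is multiplicatively closed we have $|H\cdot H|=|H|$, so $H$ cannot simultaneously be additively structured: the sum--product theorem in $\F_p$ forces $|H+H|\gg|H|^{1+\kappa}$ for some $\kappa=\kappa(\epsilon)>0$, and, after the Balog--Szemer\'edi--Gowers and Pl\"unnecke--Ruzsa machinery, this translates into the required energy bound $J_\nu(H)\leq|H|^{2\nu-1-\rho}$. Feeding it into the displayed inequality and using $|H|>p^{\epsilon}$ to absorb the factor $p^{1/2\nu}$ produces the relative gain $S<|H|^{1-\eta}$. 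The genuine obstacle is obtaining the quantitative sum--product estimate with explicit control of the exponent, and correctly balancing $\nu$ against $\rho$ and $\epsilon$ so that $p^{1/2\nu}|H|^{-\rho/2\nu}$ is a true negative power of $|H|$; once that quantitative input is secured, the reduction above and the final conversion to $p^{-\dt}$ are routine.
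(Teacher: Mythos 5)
The paper offers no proof of this statement: it is quoted verbatim as a theorem of Bourgain (\cite{B}, see also \cite{BG}) and used as a black box, so there is no internal argument to compare yours against. Your write-up is an accurate account of how the result is established in that literature, and the parts you actually carry out are correct. The final conversion is complete and right: from $\max_{(k,p)=1}|S(k)|<|H|^{1-\eta}$ and $|H|>p^{\epsilon}$ one gets $|H|^{-\eta}<p^{-\epsilon\eta}$, so $\dt=\epsilon\eta$ works. The moment-method frame is also sound: $S(tk)=S(k)$ for $t\in H$, so the maximum modulus is attained on a full multiplicative coset of size $|H|$ inside $\F_p^{*}$, and orthogonality gives $|H|\,S^{2\nu}\leq p\,J_\nu(H)$, hence $S\leq (p\,J_\nu(H)/|H|)^{1/(2\nu)}$. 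But you should be explicit that this is where your proof stops: the energy bound $J_\nu(H)\leq |H|^{2\nu-1-\rho}$ --- with $\rho$ and $\nu$ chosen so that the loss $p^{1/(2\nu)}\leq |H|^{1/(2\nu\epsilon)}$ is absorbed, which forces $\rho$ to grow roughly like $1/\epsilon$ rather than being an arbitrary fixed saving --- is the entire content of the theorem, and your proposal defers it to the sum--product and Balog--Szemer\'edi--Gowers machinery without executing any of it. So what you have is a correct reduction of the stated ($p^{-\dt}$) form to the standard ($|H|^{1-\eta}$) form plus a citation for the hard core; that is no less than what the paper itself does, but it is not a self-contained proof.
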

    Similarly, we have the following corollary:
    \begin{corr}
    Let $p$ be a prime and $1\leq d | p-1$. Let $\epsilon>0$ be fixed. Suppose that $d > p^{\epsilon}$. Then there exists a constant $\dt=\dt(\epsilon)>0$ such that
    $$
    \max_{(m,p)=1}\left|\sum_{a\in\F_p, \   a^d= 1}  e^{2\pi i m\frac {a}p}\right|<dp^{-\dt} .$$
    \end{corr}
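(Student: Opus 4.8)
The plan is to recognize Corollary 2.4 as the direct specialization of Theorem 2.4 to the subgroup of $d$-th roots of unity, mirroring exactly the deduction of Corollary 2.1 from Theorem 2.3.

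First I would identify the summation set with a subgroup. The multiplicative group $\F_p^{*}$ is cyclic of order $p-1$, and since $d\mid p-1$ it contains a unique cyclic subgroup $H$ of order $d$. Every element of $H$ satisfies $a^d=1$, and conversely the equation $a^d=1$ has at most $d$ solutions in the field $\F_p$; hence the solution set coincides exactly with $H$:
$$
\{a\in\F_p : a^d=1\}=H, \qquad |H|=d.
$$

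Next I would check the size hypothesis and apply Bourgain's estimate. The assumption $d>p^{\epsilon}$ gives $|H|=d>p^{\epsilon}$, so $H$ satisfies the condition of Theorem 2.4. Applying that theorem with $k$ taken to be $m$ (both ranging over residues coprime to $p$) yields a constant $\dt=\dt(\epsilon)>0$ with
$$
\max_{(m,p)=1}\left|\sum_{a\in H}e^{2\pi i m\frac{a}{p}}\right| < p^{-\dt}|H| = d\,p^{-\dt},
$$
which is precisely the claimed inequality.

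Since the identification of the solution set with $H$ is elementary and Theorem 2.4 is quoted as a black box, there is no substantive obstacle in this corollary; the only point needing a word of care is that the roots of $a^d=1$ genuinely form the subgroup $H$ rather than merely a set of size $d$, which is immediate from the cyclicity of $\F_p^{*}$ and $d\mid p-1$. The entire depth is hidden in Bourgain's exponential sum bound (Theorem 2.4), proved by the sum-product method; the corollary merely repackages it in the arithmetic language of order-$d$ root counting that the later average results (Theorems 1.2 and 1.3) will use.
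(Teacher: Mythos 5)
Your proposal is correct and matches the paper's (implicit) argument exactly: the paper deduces this corollary from Theorem 2.4 in the same way Corollary 2.1 is deduced from Theorem 2.3, namely by identifying $\{a\in\F_p : a^d=1\}$ with the unique cyclic subgroup of order $d$ in $\F_p^{*}$ and invoking Bourgain's bound. Your added remark that the solution set is genuinely the subgroup (not just a set of size $d$) is the only point of substance, and you handle it correctly.
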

    We omit the proof of the following corollary because it is similar to that of Corollary 2.2.
    \begin{corr}
    Let $p$ be a prime, and $y\geq 1$. Let $\epsilon>0$ be fixed. Assume that $d|p-1$ and $d > p^{\epsilon}$. Then there exists $\dt=\dt(\epsilon)>0$ such that
    $$
    \sum_{a<y, \ a^d\equiv 1 (p)} 1 = \frac yp d + O(dp^{-\dt}).
    $$
    \end{corr}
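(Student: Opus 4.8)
The plan is to mirror the proof of Corollary 2.2, replacing the sub-exponential savings of Corollary 2.1 with the genuine power savings furnished by Corollary 2.3. First I would observe that since $d\mid p-1$, the congruence $a^d\equiv 1\pmod p$ has exactly $d$ solutions in $\F_p$, namely the full set of $d$-th roots of unity, and that this set repeats with period $p$. Writing $y=qp+r$ with $0\le r<p$, each of the $q$ complete blocks $[0,p),\dots,[(q-1)p,qp)$ contributes exactly $d$ solutions, so these blocks account for $qd=\lfloor y/p\rfloor d$ \emph{exactly}; since $\frac yp d=qd+\frac rp d$, it suffices to prove the estimate for $y<p$, where the task reduces to counting the solutions lying in $[0,r)$.

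For $y<p$ I would then apply the Erd\"os--Tur\'an inequality (Theorem 2.2, with the constants $c_1=c_2=1$ of Manduit--Rivat--S\'ark\H{o}zy) to the finite set $\{a/p : a^d\equiv 1\ (p)\}\subset[0,1)$, counting the points that fall in the interval $(0,y/p)$. This yields
$$
\left|\sum_{a<y,\ a^d\equiv 1(p)}1-\frac yp d\right| \le \frac dp + \sum_{m=1}^{p-1}\frac1m\left|\sum_{a^d\equiv 1(p)}e^{2\pi i m a/p}\right|.
$$
Into the inner sum I would insert the bound of Corollary 2.3, namely $\left|\sum_{a^d\equiv 1(p)}e^{2\pi i m a/p}\right|<dp^{-\dt}$, which holds uniformly for every $m$ with $(m,p)=1$ once $d>p^{\epsilon}$. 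Since $\sum_{m=1}^{p-1}\frac1m=O(\log p)$, the right-hand side becomes $\frac dp+O(dp^{-\dt}\log p)$.

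The only remaining point is cosmetic: to absorb the harmless logarithmic factor arising from the Erd\"os--Tur\'an summation. For any fixed $\dt'<\dt$ one has $p^{-\dt}\log p=O(p^{-\dt'})$, so after replacing $\dt$ by such a $\dt'$ (still positive and still depending only on $\epsilon$) the error collapses to the claimed $O(dp^{-\dt})$. I expect no serious obstacle, precisely because the argument is a transcription of the proof of Corollary 2.2 with $e^{-\log^{C_1}p}$ traded for the power savings $p^{-\dt}$ of Theorem 2.4; the only mild care needed lies in the bookkeeping of the reduction to $y<p$ and in shrinking the exponent to swallow the $\log p$.
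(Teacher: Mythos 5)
Your proposal is correct and follows exactly the route the paper intends: the paper omits the proof of this corollary precisely because it is the proof of Corollary 2.2 with the bound $de^{-\log^{C_1}p}$ replaced by $dp^{-\dt}$ from Corollary 2.3, which is what you carry out, including the reduction to $y<p$ and the Erd\"os--Tur\'an step. Your extra care in absorbing the $\log p$ factor by shrinking $\dt$ is a sound (and slightly more explicit) version of the same bookkeeping the paper performs in Corollary 2.2 when passing from $C_1$ to $C_2$.
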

    Corollary 2.2 and 2.4 play   key roles in proving Theorem 1.1, 1.2, and 1.3. Note that this is significantly better than the trivial bound when $p$ is large:
    $$
    \sum_{a<y, \ a^d\equiv 1 (p)} 1 = \frac yp d + O(d).
    $$
    \section{Proof of Theorems}
    \subsection{Proof of Theorem 1.1}
    Let $\epsilon=\frac{4C}{\log\log x}$ and consider the summation change:
    \begin{align*}
     \sum_{a\leq y}\sum_{p\leq x}\frac{1}{l_a(p)} &= \sum_{d<x}\frac 1d \sum_{\substack{{p\leq x} \\ {p\equiv 1 (d)}}} \sum_{\substack{{a\leq y} \\ {l_a(p)=d}}}1\\
     &=\sum_{d<x^{\epsilon}} + \sum_{x^{\epsilon}\leq d<x}\\
     &=\Sigma_1 + \Sigma_2
    \end{align*}
    First, we treat $\Sigma_1$ by trivial bound and Brun-Titchmarsh inequality:
    \begin{align*}
    \Sigma_1&=\sum_{d<x^{\epsilon}}\frac 1d \sum_{\substack{{p\leq x} \\ {p\equiv 1 (d)}}} \sum_{\substack{{a\leq y} \\ {l_a(p)=d}}}1 \\
    &=\sum_{d<x^{\epsilon}}\frac 1d \sum_{\substack{{p\leq x} \\ {p\equiv 1 (d)}}} \left(\phi(d)\frac yp + O(\phi(d))\right)\\
    &=\sum_{d<x^{\epsilon}}\frac 1d \sum_{\substack{{p\leq x} \\ {p\equiv 1 (d)}}} \phi(d)\frac yp + O(E_1),
    \end{align*}
    where
    \begin{align*}
    E_1&=\sum_{d<x^{\epsilon}}\frac 1d \sum_{\substack{{p\leq x} \\ {p\equiv 1 (d)}}} \phi(d) = \sum_{d<x^{\epsilon}}\frac {\phi(d)}d \sum_{\substack{{p\leq x} \\ {p\equiv 1 (d)}}} 1\\
    &=\sum_{d<x^{\epsilon}}\frac {\phi(d)}d \pi(x;d,1)\\
    &\ll \sum_{d<x^{\epsilon}}\frac {\phi(d)}d  \frac{x}{\phi(d)\log x} \ll \epsilon x.
    \end{align*}
    Thus,
    $$\Sigma_1 = \sum_{d<x^{\epsilon}}\frac 1d \sum_{\substack{{p\leq x} \\ {p\equiv 1 (d)}}} \phi(d)\frac yp + O(\epsilon x).$$
    Now, we treat $\Sigma_2$ by M\"{o}bius inversion and Corollary 2.2:
    \begin{align*}
    \Sigma_2&=\sum_{x^{\epsilon}\leq d<x}\frac1d \sum_{\substack{{p\leq x} \\ {p\equiv 1 (d)}}}  \sum_{\substack{{a\leq y} \\ {l_a(p)=d}}}1\\
    &=\sum_{x^{\epsilon}\leq d<x}\frac1d \sum_{\substack{{p\leq x} \\ {p\equiv 1 (d)}}} \sum_{d'|d} \mu\left(\frac{d}{d'}\right)\sum_{\substack{{a\leq y} \\ {a^{d'}\equiv 1(p)}}}1\\
    &=\sum_{x^{\epsilon}\leq d<x}\frac1d \sum_{\substack{{p\leq x} \\ {p\equiv 1 (d)}}} \sum_{\substack{{d'|d}   \\ {d'<p^{\frac{C}{\log\log p}}}}} \mu\left(\frac{d}{d'}\right)\sum_{\substack{{a\leq y} \\ {a^{d'}\equiv 1(p)}}}1+\sum_{x^{\epsilon}\leq d<x}\frac1d \sum_{\substack{{p\leq x} \\ {p\equiv 1 (d)}}} \sum_{\substack{{d'|d}   \\ {d'\geq p^{\frac{C}{\log\log p}}}}} \mu\left(\frac{d}{d'}\right)\sum_{\substack{{a\leq y} \\ {a^{d'}\equiv 1(p)}}}1\\
    &=\sum_{x^{\epsilon}\leq d<x}\frac1d \sum_{\substack{{p\leq x} \\ {p\equiv 1 (d)}}} \sum_{\substack{{d'|d}   \\ {d'<p^{\frac{C}{\log\log p}}}}} \mu\left(\frac{d}{d'}\right)\left(\frac yp d' + O(d')\right)\\
    & \ \ \ + \sum_{x^{\epsilon}\leq d<x}\frac1d \sum_{\substack{{p\leq x} \\ {p\equiv 1 (d)}}} \sum_{\substack{{d'|d}   \\ {d'\geq p^{\frac{C}{\log\log p}}}}} \mu\left(\frac{d}{d'}\right)\left(\frac yp d' + O(d' e^{-\log^{C_2} p})\right).
    \end{align*}
    Then we have
    \begin{align*}
    &\Sigma_1+\Sigma_2 \\
    &=\sum_{d<x^{\epsilon}}\frac 1d \sum_{\substack{{p\leq x} \\ {p\equiv 1 (d)}}} \phi(d)\frac yp + \sum_{x^{\epsilon}\leq d<x}\frac1d \sum_{\substack{{p\leq x} \\ {p\equiv 1 (d)}}} \sum_{\substack{{d'|d}   \\ {d'<p^{\frac{C}{\log\log p}}}}} \mu\left(\frac{d}{d'}\right) \frac yp d' + \sum_{x^{\epsilon}\leq d<x}\frac1d \sum_{\substack{{p\leq x} \\ {p\equiv 1 (d)}}} \sum_{\substack{{d'|d}   \\ {d'\geq p^{\frac{C}{\log\log p}}}}} \mu\left(\frac{d}{d'}\right) \frac yp d'\\
    & \ \ \ + O(E_1)+O(E_2)+O(E_3)\\
    &=\sum_{d<x}\frac {\phi(d)}d \sum_{\substack{{p\leq x} \\ {p\equiv 1 (d)}}} \frac yp + O(E_1)+O(E_2)+O(E_3).
    \end{align*}
    where
    \begin{align*}
    E_2&=\sum_{x^{\epsilon}\leq d<x}\frac1d \sum_{\substack{{p\leq x} \\ {p\equiv 1 (d)}}} \sum_{\substack{{d'|d}   \\ {d'<p^{\frac{C}{\log\log p}}}}} \left|\mu\left(\frac{d}{d'}\right)\right| d'
    \end{align*}
    and
    \begin{align*}
    E_3&=\sum_{x^{\epsilon}\leq d<x}\frac1d \sum_{\substack{{p\leq x} \\ {p\equiv 1 (d)}}} \sum_{\substack{{d'|d}   \\ {d'\geq p^{\frac{C}{\log\log p}}}}} \left|\mu\left(\frac{d}{d'}\right)\right| d'e^{-\log^{C_2} p}.
    \end{align*}
    Here, the term
    $$
    \sum_{d<x}\frac {\phi(d)}d \sum_{\substack{{p\leq x} \\ {p\equiv 1 (d)}}} \frac yp $$
    is the main term in ~\cite[Theorem 1.4]{Fe}. It is proven to be $y\log x + O(y\log\log x)$ in ~\cite[Theorem 1.4]{Fe} which will be shown to be $y\log x + Cy \log\log x + O(1)$ later.

    We treat $E_2$. Since $\pi(x;d,1)\ll \frac xd$, we have:
    \begin{align*}
    E_2&=\sum_{x^{\epsilon}\leq d<x}\frac1d \sum_{\substack{{p\leq x} \\ {p\equiv 1 (d)}}} \sum_{\substack{{d'|d}   \\ {d'<p^{\frac{C}{\log\log p}}}}} \left|\mu\left(\frac{d}{d'}\right)\right| d'\\
    &\ll \sum_{x^{\epsilon}\leq d<x}\frac1d \sum_{\substack{{p\leq x} \\ {p\equiv 1 (d)}}} \sum_{d'<p^{\frac{C}{\log\log p}}}d'\\
    &\ll \sum_{x^{\epsilon}\leq d<x}\frac1d \sum_{\substack{{p\leq x} \\ {p\equiv 1 (d)}}} p^{\frac{2C}{\log\log p}}\\
    &\ll x^{\frac{2C}{\log\log x}} \sum_{x^{\epsilon}\leq d<x}\frac1d  \pi(x;d,1)\\
    &\ll x^{\frac{2C}{\log\log x}} \sum_{x^{\epsilon}\leq d<x} \frac x{d^2}.
    \end{align*}
    Since $\sum_{d\geq x} \frac1{d^2} \ll \frac1x$, we have
    $$
    E_2\ll  x^{1+\frac{2C}{\log\log x}-\epsilon} \ll x^{1-\frac{\epsilon}2}.$$
    We are left with $E_3$. First, we have the following:
    \begin{align*}
    \sum_{\substack{{d'|d}   \\ {d'\geq p^{\frac{C}{\log\log p}}}}} \left|\mu\left(\frac{d}{d'}\right)\right| d'&\leq \sum_{d'|d} \left|\mu\left(\frac{d}{d'}\right)\right| d'\\
    &\leq d \prod_{p|d}\left(1+\frac1p\right)\\
    &=d \prod_{p|d} \frac{ 1+\frac1p }{1-\frac1p} \left(1-\frac1p\right)\\
    &\leq \phi(d) 3^{\omega(d)}
    \end{align*}
    where $\omega(d)$ is the number of distinct prime factors of $d$.

    Again by $\pi(x;d,1)\ll \frac xd$, we have
    \begin{align*}
    E_3&\ll \sum_{x^{\epsilon}\leq d<x}\frac1d \sum_{\substack{{p\leq x} \\ {p\equiv 1 (d)}}} \phi(d) 3^{\omega(d)} e^{-\log^{C_3} x}  \\
    & \ll \sum_{x^{\epsilon}\leq d<x}\frac1d \phi(d)3^{\omega(d)} \frac xde^{-\log^{C_3} x}
    \end{align*}
    By partial summation with $\sum_{d\leq t} 3^{\omega(d)} \ll t\log^2 t$,
    \begin{align*}
    E_3 &\ll \sum_{x^{\epsilon}\leq d<x} \frac{3^{\omega(d)}}d x e^{-\log^{C_3} x}\ll x(\log^3 x )e^{-\log^{C_3} x}\ll x e^{-\log^{C_4}x}.
    \end{align*}
    Combining these estimates, we have
    $$
    \sum_{a\leq y}\sum_{p\leq x}\frac{1}{l_a(p)} = \sum_{d<x}\frac {\phi(d)}d \sum_{\substack{{p\leq x}   \\ {p\equiv 1 (d)}}} \frac yp + O(\epsilon x)+O(x^{1 - \frac{\epsilon}2 })+O(x e^{-\log^{C_4}x}) $$
    with the first error term dominating the other two. Hence,
    $$
    \sum_{a\leq y}\sum_{p\leq x}\frac{1}{l_a(p)} = \sum_{d<x}\frac {\phi(d)}d \sum_{\substack{{p\leq x}   \\ {p\equiv 1 (d)}}} \frac yp + O\left(\frac{x}{\log\log x}\right).
    $$
    Following the proof of ~\cite[Theorem 1.4]{Fe}, we have
    \begin{align*}
    \sum_{d<x}\frac{\phi(d)}d\pi(x;d,1)&=\sum_{k<x}\frac{\mu(k)}k\sum_{\substack{{p\leq x} \\ {p\equiv 1 (k)}}}\tau\left(\frac{p-1}k\right) \\
    &=\sum_{k\leq \log^{A+2} x} + \sum_{\log^{A+2}x<k<x}\\
    &=\sum_{k\leq \log^{A+2} x} + O\left(\frac{x}{\log^A x}\right).
    \end{align*}
    As Fiorilli and Felix pointed out, we apply
    $$
     \sum_{\substack{{p\leq x}\\{p\equiv 1(k)}}}\tau\left(\frac{p-1}{k}\right)=\frac{x}{k}C_1(k) + \frac{1}{k}\left(2C_2(k)+C_1(k)\log\left(\frac{(k')^2}{k}\right)\right)\textrm{li}(x)+O\left(\frac{x}{\log^A x}\right)$$
    where
    $$
    C_1(k)=\frac{\zeta(2)\zeta(3)}{\zeta(6)} \prod\limits_{p|k} \left(1+\frac{p-1}{p^2-p+1}\right),$$
    $$C_2(k)=C_1(k)\left(\gamma-\sum_{p}\frac{\log p}{p^2-p+1} -\sum_{p|k}\frac{(p-1)p\log p}{p^2-p+1}\right),$$
    and
    $k'=\prod_{p|k}p$.

    As in ~\cite[Theorem 1.4]{Fe}, all the sums over $k$ are absolutely convergent and $\sum \frac{\mu(k)C_1(k)}{k^2}=1$, so we have
    \begin{align*}
    \sum_{k\leq \log^{A+2} x}\frac{\mu(k)}k\sum_{\substack{{p\leq x} \\ {p\equiv 1 (k)}}}\tau\left(\frac{p-1}k\right)&= \sum_{k\leq\log^{A+2} x} \frac{\mu(k)}{k^2} \left(  x C_1(k) +  \left(2C_2(k)+C_1(k)\log\left(\frac{(k')^2}{k}\right)\right)\textrm{li}(x)\right)\\& \ \ +O\left(\frac{x}{\log^A x}\right)\\
    &= x + \left(2\gamma-2\sum_{p}\frac{\log p}{p^2-p+1}\right) \textrm{li}(x) \\ & \ \ + \left(\sum_{k=1}^{\infty} \frac{\mu(k)C_1(k)}{k^2}\left(-2\sum_{p|k}\frac{(p-1)p\log p}{p^2-p+1}+\log\left(\frac{(k')^2}{k}\right)\right)\right) \textrm{li}(x) \\& \ \ + O\left(\frac x{\log^A x}\right).
    \end{align*}
    Since $\textrm{li}(u)= \frac{u}{\log u} + O\left(\frac{u}{\log^2 u}\right)$, we finally obtain
    \begin{align*}
    \sum_{d<x}\frac {\phi(d)}d \sum_{\substack{{p\leq x}   \\ {p\equiv 1 (d)}}} \frac 1p &= \int_2^x\frac1{u^2}\sum_{k\leq u}\frac{\phi(k)}k\pi(x;k,1)du +O(1)\\
    &=\int_2^x \frac{1}{u^2} \left( u + C \frac{u}{\log u} + O\left(\frac{u}{\log^2 u}\right)\right) du +O(1)\\
    &=\log x + C \log\log x + O(1)
    \end{align*}
    where
    \begin{align*}
    C&=  2\gamma-2\sum_{p}\frac{\log p}{p^2-p+1}  \\
    & \ \ +  \sum_{k=1}^{\infty} \frac{\mu(k)C_1(k)}{k^2}\left(-2\sum_{p|k}\frac{(p-1)p\log p}{p^2-p+1}+\log\left(\frac{(k')^2}{k}\right)\right).
    \end{align*}
    Since the terms in the second sum over $k$ only appears when $k$ is square free, we have $k'=k$. Thus,
    \begin{align*}
    C&=  2\gamma-2\sum_{p}\frac{\log p}{p^2-p+1}  \\
    & \ \ +  \sum_{k=1}^{\infty} \frac{\mu(k)C_1(k)}{k^2}\left(-2\sum_{p|k}\frac{(p-1)p\log p}{p^2-p+1}+\log k \right).
    \end{align*}
    This completes the proof of Theorem 1.1.
    \subsection{Proof of Theorem 1.2}
    Let $\psi(x)$ be an increasing function which tends to infinity as $x\rightarrow\infty$. The rate of increase of $\psi(x)$ is to be determined. We start with the change of order in summation:
    \begin{align*}
    \sum_{a<y}\sum_{\substack{{p<x}\\{l_a(p)>\frac{x}{\psi(x)}}}} 1 &=\sum_{\frac{x}{\psi(x)}<d<x} \sum_{\substack{{p<x}\\{p\equiv 1 (d)}}}\sum_{\substack{{a<y}\\{l_a(p)=d}}} 1\\
    &=\sum_{\frac{x}{\psi(x)}<d<x}  \sum_{\substack{{p<x}\\{p\equiv 1 (d)}}}\sum_{\substack{{d'|d}\\{d'<p^{\epsilon}}}}\mu\left(\frac{d}{d'}\right)\sum_{\substack{{a<y}\\{a^{d'}\equiv 1 (p)}}}1 + \sum_{\frac{x}{\psi(x)}<d<x} \sum_{\substack{{p<x}\\{p\equiv 1 (d)}}}\sum_{\substack{{d'|d}\\{d'\geq p^{\epsilon}}}}\mu\left(\frac{d}{d'}\right)\sum_{\substack{{a<y}\\{a^{d'}\equiv 1 (p)}}}1\\
    &=\sum_{\frac{x}{\psi(x)}<d<x} \sum_{\substack{{p<x}\\{p\equiv 1 (d)}}} \sum_{\substack{{d'|d}\\{d'<p^{\epsilon}}}}\mu\left(\frac{d}{d'}\right)\left(\frac yp d' + O(d')\right)\\
    & \ \ + \sum_{\frac{x}{\psi(x)}<d<x} \sum_{\substack{{p<x}\\{p\equiv 1 (d)}}} \sum_{\substack{{d'|d}\\{d'\geq p^{\epsilon}}}}\mu\left(\frac{d}{d'}\right)\left(\frac yp d' + O(d'p^{-\dt})\right)\\
    &=\sum_{\frac{x}{\psi(x)}<d<x} \sum_{\substack{{p<x}\\{p\equiv 1 (d)}}} \frac yp \phi(d) + O(E_1)+O(E_2)
    \end{align*}
    where
    \begin{align*}
    E_1 &= \sum_{d<x}\sum_{\substack{{p<x}\\{p\equiv 1 (d)}}}\sum_{\substack{{d'|d}\\{d'<p^{\epsilon}}}}\left|\mu\left(\frac{d}{d'}\right)\right|d'\\
    &\ll \sum_{d<x}\sum_{\substack{{p<x}\\{p\equiv 1 (d)}}} \sum_{d'<p^{\epsilon}} d'\\
    &\ll x^{2\epsilon}\sum_{d<x} \pi(x;d,1)\\
    &\ll x^{1+2\epsilon}\log x
    \end{align*}
    and
    \begin{align*}
    E_2 &= \sum_{d<x}\sum_{\substack{{p<x}\\{p\equiv 1 (d)}}}\sum_{\substack{{d'|d}\\{d'\geq p^{\epsilon}}}}\left|\mu\left(\frac{d}{d'}\right)\right|d'p^{-\dt}\\
    &\ll \sum_{d<x}\sum_{\substack{{p<x}\\{p\equiv 1 (d)}}}\phi(d)3^{\omega(d)} p^{-\dt}\\
    &\ll \sum_{d<x}\phi(d)3^{\omega(d)}\frac{x^{1-\dt}}d\\
    &\ll x^{2-\dt}\log^2 x .
    \end{align*}
    Now we treat the main term. Since we have $\sum_{\substack{{p<x}\\{p\equiv 1 (d)}}} \frac1p = \frac{\log\log x + O(\log d)}{\phi(d)}$ by ~\cite[Lemma 2.5]{EP},
    \begin{align*}
    \sum_{\frac{x}{\psi(x)}<d<x} \phi(d) \sum_{\substack{{p<x}\\{p\equiv 1 (d)}}} \frac1p &= \sum_{d<x} \phi(d) \sum_{\substack{{p<x}\\{p\equiv 1 (d)}}}\frac1p - \sum_{d\leq \frac{x}{\psi(x)}} \phi(d) \sum_{\substack{{p<x}\\{p\equiv 1 (d)}}} \frac1p \\
    &=\sum_{p<x} \frac1p\sum_{d|p-1}\phi(d) - \sum_{d\leq \frac{x}{\psi(x)}} \phi(d) \sum_{\substack{{p<x}\\{p\equiv 1 (d)}}} \frac1p \\
    &=\sum_{p<x} \frac{p-1}p + O\left( \sum_{d\leq \frac{x}{\psi(x)}} \phi(d) \frac{\log\log x + \log d}{\phi(d)}\right)\\
    &=\pi(x)+O(\log\log x) +O\left(\frac{x\log x}{\psi(x)}\right).
    \end{align*}
    Combining all the estimates, we have
    $$
    \sum_{a<y}\sum_{\substack{{p<x}\\{l_a(p)>\frac{x}{\psi(x)}}}} 1 = y\pi(x)+O(y\log\log x) + O\left(\frac{xy\log x}{\psi(x)}\right) + O(x^{2-\dt}\log^2 x).
    $$
    Since we have $y<x$, the error term $O(y\log\log x)$ is dominated by $O(x^{2-\dt}\log^2 x)$.
    This completes the proof of Theorem 1.2.
    \subsection{Proof of Theorem 1.3}
    We begin with an application of Mobius inversion and Corollary 2.4:
    \begin{align*}
    \sum_{a<y}\sum_{d<x}\sum_{\substack{{p<x}\\{l_a(p)=d}}}d &=\sum_{d<x}d\sum_{\substack{{p<x}\\{p\equiv 1 (d)}}}\sum_{\substack{{a<y}\\{l_a(p)=d}}}1\\
    &=\sum_{d<x}d\sum_{\substack{{p<x}\\{p\equiv 1 (d)}}}\sum_{\substack{{d'|d}\\{d'<p^{\epsilon}}}}\mu\left(\frac{d}{d'}\right)\sum_{\substack{{a<y}\\{a^{d'}\equiv 1 (p)}}}1+\sum_{d<x}d\sum_{\substack{{p<x}\\{p\equiv 1 (d)}}}\sum_{\substack{{d'|d}\\{d'\geq p^{\epsilon}}}}\mu\left(\frac{d}{d'}\right)\sum_{\substack{{a<y}\\{a^{d'}\equiv 1 (p)}}}1\\
    &=\sum_{d<x}d\sum_{\substack{{p<x}\\{p\equiv 1 (d)}}}\sum_{\substack{{d'|d}\\{d'<p^{\epsilon}}}}\mu\left(\frac{d}{d'}\right)\left(\frac yp d' + O(d')\right)\\ &
 \ \ +\sum_{d<x}d\sum_{\substack{{p<x}\\{p\equiv 1 (d)}}}\sum_{\substack{{d'|d}\\{d'\geq p^{\epsilon}}}}\mu\left(\frac{d}{d'}\right)\left(\frac yp d' + O(d'p^{-\dt})\right)\\
 &=\sum_{d<x}d\phi(d)\sum_{\substack{{p<x}\\{p\equiv 1 (d)}}}\frac yp + O(E_1)+O(E_2),
    \end{align*}
    where
    \begin{align*}
    E_1 &= \sum_{d<x}d\sum_{\substack{{p<x}\\{p\equiv 1 (d)}}}\sum_{\substack{{d'|d}\\{d'<p^{\epsilon}}}}\left|\mu\left(\frac{d}{d'}\right)\right|d'\\
    &\ll \sum_{d<x}d\sum_{\substack{{p<x}\\{p\equiv 1 (d)}}} \sum_{d'<p^{\epsilon}} d'\\
    &\ll x^{2\epsilon}\sum_{d<x} d\pi(x;d,1)\\
    &\ll x^{2+2\epsilon}
    \end{align*}
    and
    \begin{align*}
    E_2 &= \sum_{d<x}d\sum_{\substack{{p<x}\\{p\equiv 1 (d)}}}\sum_{\substack{{d'|d}\\{d'\geq p^{\epsilon}}}}\left|\mu\left(\frac{d}{d'}\right)\right|d'p^{-\dt}\\
    &\ll \sum_{d<x}d\sum_{\substack{{p<x}\\{p\equiv 1 (d)}}}\phi(d)3^{\omega(d)} p^{-\dt}\\
    &\ll \sum_{d<x}d\phi(d)3^{\omega(d)}\frac{x^{1-\dt}}d\\
    &\ll x^{3-\dt}\log^2 x.
    \end{align*}
    Now we treat the main term:
    \begin{align*}
    \sum_{d<x}d\phi(d)\sum_{\substack{{p<x}\\{p\equiv 1 (d)}}}\frac yp&=y\sum_{p<x}\frac1p\sum_{d|p-1} d\phi(d)\\
    &=y \sum_{p<x} \frac{(p-1)\ap(p-1)}p  \\
    &=y \left(\sum_{p<x} \ap(p-1) - \sum_{p<x} \frac{\ap(p-1)}p\right)
    \end{align*}
    Here, $\alpha(n) = \frac1n \sum_{d|n} d\phi(d)$ is the average order of $\Z/n\Z$. We use the following theorem by F. Luca ~\cite[Theorem 1]{L}:
    \begin{theorem}
    For any constant $A>0$,
    $$
    \frac{1}{\pi(x)} \sum_{p<x}\frac{\ap(p-1)}{p-1} = c + O\left(\frac 1{\log^A x}\right)
    $$
    where
    $$
    c= \prod_{p} \left(1-\frac p{p^3-1}\right).$$
    \end{theorem}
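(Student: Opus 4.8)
The plan is to reduce the average of $\ap(p-1)/(p-1)$ to a sum over arithmetic progressions and then to an Euler product. First I would record that the quantity being averaged is multiplicative in its argument: setting
$$g(n)=\frac{\ap(n)}{n}=\frac{1}{n^2}\sum_{d|n}d\phi(d),$$
one checks that $g$ is multiplicative, and a short geometric-series computation gives the closed form $g(p^k)=\frac{p}{p+1}+\frac{1}{(p+1)p^{2k}}$ for every prime power $p^k$. From this I would extract the convolution factor $G=g*\mu$, i.e.\ the multiplicative function with $g(n)=\sum_{d|n}G(d)$, and show that $G(p^k)=-\frac{p-1}{p^{2k}}$ for all $k\ge 1$. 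The purpose of this step is quantitative: since $|G(p)|\approx 1/p$, the function $|G(d)|/\phi(d)$ behaves like $1/d^2$ on squarefree $d$, so $\sum_d |G(d)|/\phi(d)$ converges absolutely with tail $\ll 1/z$ beyond $d>z$.

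Next I would swap the order of summation. Writing $g(p-1)=\sum_{d|p-1}G(d)$ and summing over primes $p<x$ gives
$$\sum_{p<x}\frac{\ap(p-1)}{p-1}=\sum_{d<x}G(d)\,\pi(x;d,1),$$
which I would split at a threshold $z=\log^{A+1}x$. For the small moduli $d\le z$, the Siegel--Walfisz theorem yields $\pi(x;d,1)=\frac{\textrm{li}(x)}{\phi(d)}+O\!\left(x\exp(-c\sqrt{\log x})\right)$ uniformly, so this range contributes $\textrm{li}(x)\sum_{d\le z}\frac{G(d)}{\phi(d)}$ together with an error $\ll x(\log\log x)\exp(-c\sqrt{\log x})$, which is negligible because $\sum_{d\le z}|G(d)|\ll\log z$. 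For the large moduli $z<d<x$, Brun--Titchmarsh gives $\pi(x;d,1)\ll x/\phi(d)$, and the decay of $G$ bounds the entire tail by $\ll x\sum_{d>z}|G(d)|/\phi(d)\ll x/z$.

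The Euler product then resolves the main term. Completing the series and evaluating the local factors via one more geometric series,
$$\sum_{d=1}^{\infty}\frac{G(d)}{\phi(d)}=\prod_p\left(1+\sum_{k\ge 1}\frac{G(p^k)}{\phi(p^k)}\right)=\prod_p\left(1-\frac{p}{p^3-1}\right)=c,$$
since $\sum_{k\ge 1}G(p^k)/\phi(p^k)=-\sum_{k\ge1}p^{-(3k-1)}=-p/(p^3-1)$. Hence the full expression equals $c\,\textrm{li}(x)+O(x/z)+O(x\exp(-c\sqrt{\log x}))$, and using $\pi(x)=\textrm{li}(x)+O(x\exp(-c\sqrt{\log x}))$ together with $z=\log^{A+1}x$, division by $\pi(x)$ produces $c+O(1/\log^A x)$.

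I expect the main obstacle to be organizing the error term uniformly across the whole range of $d$. The Siegel--Walfisz estimate is uniform only for $d$ up to a fixed power of $\log x$, so the saving of an arbitrary power of $\log x$ hinges on balancing that range against the Brun--Titchmarsh tail; this balance succeeds precisely because $G$ decays like $1/d$, forcing the tail beyond $\log^{A+1}x$ to be $\ll x/\log^{A+1}x=o(\pi(x)/\log^{A}x)$. The multiplicativity computation of $G$ and the verification $\sum_d G(d)/\phi(d)=c$ are the only other places requiring care, but both are routine once the closed form for $g(p^k)$ is in hand.
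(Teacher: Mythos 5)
Your proposal is correct, but note that the paper does not prove this statement at all: it is quoted verbatim as Theorem 1 of F.~Luca \cite{L} and used as a black box in the proof of Theorem 1.3, so there is no in-paper argument to compare against. What you have written is a complete, self-contained proof of the cited result, and it follows the standard route one would expect (and essentially the route Luca takes): express $g(n)=\ap(n)/n$ as a convolution $g=G*1$, sum over primes to get $\sum_{d<x}G(d)\pi(x;d,1)$, treat small moduli by Siegel--Walfisz and the tail by Brun--Titchmarsh, and evaluate the resulting Euler product. I checked the computations that carry the weight: $g(p^k)=\frac{p}{p+1}+\frac{1}{(p+1)p^{2k}}$ and hence $G(p^k)=-(p-1)p^{-2k}$ are correct; the key tail estimate holds because $|G(d)|/\phi(d)=\prod_{p^k\| d}p^{1-3k}\leq d^{-2}$ for \emph{all} $d$ (not just squarefree $d$), which justifies your claimed $O(1/z)$ bound beyond $d>z$; and the local factor is $1+\sum_{k\geq 1}G(p^k)/\phi(p^k)=1-\sum_{k\geq1}p^{1-3k}=1-\frac{p}{p^3-1}$, giving the constant $c$. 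The only caveat worth flagging is that the implied constant in Siegel--Walfisz depends (ineffectively) on the exponent $A+1$ in your cutoff $z=\log^{A+1}x$, which is harmless here but should be said if one wants effective constants.
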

    Applying this theorem with partial summation, we obtain
    \begin{align*}
    \sum_{p<x} \ap(p-1) - \sum_{p<x} \frac{\ap(p-1)}p&=c \ \textrm{Li}(x^2) + O\left( \frac{x^2}{\log^A x}\right).
    \end{align*}
    Therefore,
    $$
    \sum_{a<y}\sum_{d<x}d\sum_{\substack{{p<x}\\{l_a(p)=d}}}1 =c y \textrm{Li}(x^2) + O\left( \frac{yx^2}{\log^A x}\right) + O(x^{3-\dt}\log^2 x).$$
    This completes the proof of Theorem 1.3.

    For the proof of Corollary 1.1, we use $\textrm{Li}(x^2) = \frac12 x\pi(x) + O\left(\frac{x^2}{\log^2 x}\right)$.

    \section{Remarks}
    The theorems in this paper have resemblance. If we change order of summation to put $\sum_d$ first, Theorem 1.1 is essentially $\sum_d d^{-1}\sum_p\sum_a$. Theorem 1.2 is   $\sum_d d^{0}\sum_p\sum_a$, and Theorem 1.3 is   $\sum_d d^{1}\sum_p\sum_a$.
    There is a difference in the method of Theorem 1.1, and the other two. In Theorem 1.1, we split the sum into four parts, while we split into three parts in Theorem 1.2 and Theorem 1.3.  This is because $d^{-1}$ is large for small $d$'s. We do not have a better information than $O(\epsilon x)$ for the error term $O(E_1)$ in Theorem 1.1, unless we have better exponential sum results. However, the method presented in this paper has wide variety of applications. For various conditional results, we could obtain the corresponding unconditional average results, and this method of exponential sums is powerful in shortening the range of averaging. In the upcoming paper, we will consider problems on the order of $a$ modulo $n$, for general modulus $n$.
        \flushleft

\end{document}